\newcommand*{\htarrow}{\lhook\joinrel\twoheadrightarrow}
\begin{document}

\newcommand{\REMARK}[1]{\marginpar{\tiny #1}}
\newtheorem{thm}{Theorem}[subsection]
\newtheorem{lemma}[thm]{Lemma}
\newtheorem{corol}[thm]{Corollary}
\newtheorem{propo}[thm]{Proposition}
\newtheorem{defin}[thm]{Definition}
\newtheorem{Remark}[thm]{Remark}
\numberwithin{equation}{subsection}

\newtheorem{notas}[thm]{Notations}
\newtheorem{nota}[thm]{Notation}
\newtheorem{defis}[thm]{Definitions}
\newtheorem*{thm*}{Theorem}
\newtheorem*{prop*}{Proposition}
\newtheorem*{conj*}{Conjecture}

\def\Tm{{\mathbb T}}
\def\Um{{\mathbb U}}
\def\Am{{\mathbb A}}
\def\Fm{{\mathbb F}}
\def\Mm{{\mathbb M}}
\def\Nm{{\mathbb N}}
\def\Pm{{\mathbb P}}
\def\Qm{{\mathbb Q}}
\def\Zm{{\mathbb Z}}
\def\Dm{{\mathbb D}}
\def\Cm{{\mathbb C}}
\def\Rm{{\mathbb R}}
\def\Gm{{\mathbb G}}
\def\Lm{{\mathbb L}}
\def\Km{{\mathbb K}}
\def\Om{{\mathbb O}}
\def\Em{{\mathbb E}}
\def\Xm{{\mathbb X}}

\def\BC{{\mathcal B}}
\def\QC{{\mathcal Q}}
\def\TC{{\mathcal T}}
\def\ZC{{\mathcal Z}}
\def\AC{{\mathcal A}}
\def\CC{{\mathcal C}}
\def\DC{{\mathcal D}}
\def\EC{{\mathcal E}}
\def\FC{{\mathcal F}}
\def\GC{{\mathcal G}}
\def\HC{{\mathcal H}}
\def\IC{{\mathcal I}}
\def\JC{{\mathcal J}}
\def\KC{{\mathcal K}}
\def\LC{{\mathcal L}}
\def\MC{{\mathcal M}}
\def\NC{{\mathcal N}}
\def\OC{{\mathcal O}}
\def\PC{{\mathcal P}}
\def\UC{{\mathcal U}}
\def\VC{{\mathcal V}}
\def\XC{{\mathcal X}}
\def\SC{{\mathcal S}}
\def\RC{{\mathcal R}}

\def\BF{{\mathfrak B}}
\def\AF{{\mathfrak A}}
\def\GF{{\mathfrak G}}
\def\EF{{\mathfrak E}}
\def\CF{{\mathfrak C}}
\def\DF{{\mathfrak D}}
\def\JF{{\mathfrak J}}
\def\LF{{\mathfrak L}}
\def\MF{{\mathfrak M}}
\def\NF{{\mathfrak N}}
\def\XF{{\mathfrak X}}
\def\UF{{\mathfrak U}}
\def\KF{{\mathfrak K}}
\def\FF{{\mathfrak F}}

\def \longmapright#1{\smash{\mathop{\longrightarrow}\limits^{#1}}}
\def \mapright#1{\smash{\mathop{\rightarrow}\limits^{#1}}}
\def \lexp#1#2{\kern \scriptspace \vphantom{#2}^{#1}\kern-\scriptspace#2}
\def \linf#1#2{\kern \scriptspace \vphantom{#2}_{#1}\kern-\scriptspace#2}
\def \linexp#1#2#3 {\kern \scriptspace{#3}_{#1}^{#2} \kern-\scriptspace #3}

\def \Sel {{\mathop{\mathrm{Sel}}\nolimits}}
\def \Ext{\mathop{\mathrm{Ext}}\nolimits}
\def \ad{\mathop{\mathrm{ad}}\nolimits}
\def \sh{\mathop{\mathrm{Sh}}\nolimits}
\def \irr{\mathop{\mathrm{Irr}}\nolimits}
\def \FH{\mathop{\mathrm{FH}}\nolimits}
\def \FPH{\mathop{\mathrm{FPH}}\nolimits}
\def \coh{\mathop{\mathrm{Coh}}\nolimits}
\def \res{\mathop{\mathrm{Res}}\nolimits}
\def \op{\mathop{\mathrm{op}}\nolimits}
\def \rec {\mathop{\mathrm{rec}}\nolimits}
\def \art{\mathop{\mathrm{Art}}\nolimits}
\def \vol {\mathop{\mathrm{vol}}\nolimits}
\def \cusp {\mathop{\mathrm{Cusp}}\nolimits}
\def \scusp {\mathop{\mathrm{Scusp}}\nolimits}
\def \Iw {\mathop{\mathrm{Iw}}\nolimits}
\def \JL {\mathop{\mathrm{JL}}\nolimits}
\def \speh {\mathop{\mathrm{Speh}}\nolimits}
\def \isom {\mathop{\mathrm{Isom}}\nolimits}
\def \Vect {\mathop{\mathrm{Vect}}\nolimits}
\def \groth {\mathop{\mathrm{Groth}}\nolimits}
\def \hom {\mathop{\mathrm{Hom}}\nolimits}
\def \deg {\mathop{\mathrm{deg}}\nolimits}
\def \val {\mathop{\mathrm{val}}\nolimits}
\def \det {\mathop{\mathrm{det}}\nolimits}
\def \rep {\mathop{\mathrm{Rep}}\nolimits}
\def \spec {\mathop{\mathrm{Spec}}\nolimits}
\def \fr {\mathop{\mathrm{Fr}}\nolimits}
\def \frob {\mathop{\mathrm{Frob}}\nolimits}
\def \ker {\mathop{\mathrm{Ker}}\nolimits}
\def \im {\mathop{\mathrm{Im}}\nolimits}
\def \Red {\mathop{\mathrm{Red}}\nolimits}
\def \red {\mathop{\mathrm{red}}\nolimits}
\def \aut {\mathop{\mathrm{Aut}}\nolimits}
\def \diag {\mathop{\mathrm{diag}}\nolimits}
\def \spf {\mathop{\mathrm{Spf}}\nolimits}
\def \Def {\mathop{\mathrm{Def}}\nolimits}
\def \twist {\mathop{\mathrm{Twist}}\nolimits}
\def \supp {\mathop{\mathrm{Supp}}\nolimits}
\def \Id {{\mathop{\mathrm{Id}}\nolimits}}
\def \lie {{\mathop{\mathrm{Lie}}\nolimits}}
\def \Ind{\mathop{\mathrm{Ind}}\nolimits}
\def \ind {\mathop{\mathrm{ind}}\nolimits}
\def \bad {\mathop{\mathrm{Bad}}\nolimits}
\def \top {\mathop{\mathrm{Top}}\nolimits}
\def \ker {\mathop{\mathrm{Ker}}\nolimits}
\def \coker {\mathop{\mathrm{Coker}}\nolimits}
\def \gal {{\mathop{\mathrm{Gal}}\nolimits}}
\def \Nr {{\mathop{\mathrm{Nr}}\nolimits}}
\def \rn {{\mathop{\mathrm{rn}}\nolimits}}
\def \tr {{\mathop{\mathrm{Tr~}}\nolimits}}
\def \Sp {{\mathop{\mathrm{Sp}}\nolimits}}
\def \st {{\mathop{\mathrm{St}}\nolimits}}
\def \sp{{\mathop{\mathrm{Sp}}\nolimits}}
\def \perv{\mathop{\mathrm{Perv}}\nolimits}
\def \tor {{\mathop{\mathrm{Tor}}\nolimits}}
\def \gr {{\mathop{\mathrm{gr}}\nolimits}}
\def \nilp {{\mathop{\mathrm{Nilp}}\nolimits}}
\def \obj {{\mathop{\mathrm{Obj}}\nolimits}}
\def \spl {{\mathop{\mathrm{Spl}}\nolimits}}
\def \unr {{\mathop{\mathrm{Unr}}\nolimits}}
\def \alg {{\mathop{\mathrm{Alg}}\nolimits}}
\def \grr {{\mathop{\mathrm{grr}}\nolimits}}
\def \cogr {{\mathop{\mathrm{cogr}}\nolimits}}
\def \coFil {{\mathop{\mathrm{coFil}}\nolimits}}

\def \rem{{\noindent\textit{Remark.~}}}
\def \rems{{\noindent\textit{Remarques:~}}}
\def \ext {{\mathop{\mathrm{Ext}}\nolimits}}
\def \End {{\mathop{\mathrm{End}}\nolimits}}

\def\semi{\mathrel{>\!\!\!\triangleleft}}
\let \DS=\displaystyle
\def\HT{{\mathop{\mathcal{HT}}\nolimits}}

\def \hi{\HC}
\newcommand*{\tarrow}{\relbar\joinrel\mid\joinrel\twoheadrightarrow}
\newcommand*{\harrow}{\lhook\joinrel\relbar\joinrel\mid\joinrel\rightarrow}
\newcommand*{\rarrow}{\relbar\joinrel\mid\joinrel\rightarrow}
\def \coim {{\mathop{\mathrm{Coim}}\nolimits}}
\def \can {{\mathop{\mathrm{can}}\nolimits}}
\def\LFF{{\mathscr L}}

\setcounter{secnumdepth}{3} \setcounter{tocdepth}{3}

\def \Fil{\mathop{\mathrm{Fil}}\nolimits}
\def \CoFil{\mathop{\mathrm{CoFil}}\nolimits}
\def \Fill{\mathop{\mathrm{Fill}}\nolimits}
\def \CoFill{\mathop{\mathrm{CoFill}}\nolimits}
\def\SF{{\mathfrak S}}
\def\PF{{\mathfrak P}}
\def \EFil{\mathop{\mathrm{EFil}}\nolimits}
\def \EFill{\mathop{\mathrm{EFill}}\nolimits}
\def \FP{\mathop{\mathrm{FP}}\nolimits}

\let \longto=\longrightarrow
\let \oo=\infty

\let \d=\delta
\let \k=\kappa

\renewcommand{\theequation}{\arabic{section}.\arabic{subsection}.\arabic{thm}}
\newcommand{\marque}{\addtocounter{thm}{1}
{\smallskip \noindent \textit{\thethm}~---~}}

\renewcommand\atop[2]{\ensuremath{\genfrac..{0pt}{1}{#1}{#2}}}

\newcommand\atopp[2]{\genfrac{}{}{0pt}{}{#1}{#2}}

\title[Ihara's lemma in higher dimension]{Ihara's lemma and level rising in higher dimension}


\author{Boyer Pascal}
\email{boyer@math.univ-paris13.fr}
\address{Universit\'e Paris 13, Sorbonne Paris Cit\'e \\
LAGA, CNRS, UMR 7539\\ 
F-93430, Villetaneuse (France) \\
PerCoLaTor: ANR-14-CE25}

\frontmatter
%
%

\begin{abstract}
A key ingredient in the Taylor-Wiles proof of Fermat last theorem is the classical Ihara's lemma 
which is used to rise the modularity
property between some congruent galoisian representations. In their work on Sato-Tate, 
Clozel-Harris-Taylor proposed a generalization of the Ihara's lemma in higher dimension
for some similitude groups. The main aim of this paper is then to prove some new instances
of this generalized Ihara's lemma by considering some particular non pseudo Eisenstein maximal
ideals of unramified Hecke algebras. As a consequence, we prove a level rising statement.
\end{abstract}

\subjclass{11F70, 11F80, 11F85, 11G18, 20C08}


\keywords{Shimura varieties, torsion in the cohomology, maximal ideal of the Hecke algebra,
localized cohomology, galois representation}

\maketitle

\pagestyle{headings} \pagenumbering{arabic}

\tableofcontents
%
%

\section*{Introduction}
\renewcommand{\theequation}{\arabic{equation}}
\backmatter

Let $F=F^+E$ be a CM field with $E/\Qm$ quadratic imaginary.
For $\overline B/F$ a central division algebra with dimension $d^2$ equipped with a involution of
second specie $*$ and $\beta \in \overline B^{*=-1}$, consider the similitude group $\overline G/\Qm$ 
defined for any $\Qm$-algebra $R$ by
$$\overline G(R):= \bigl \{ (\lambda,g) \in R^\times \times (\overline B^{op} \otimes_\Qm R)^\times
\hbox{ such that } gg^{\sharp_\beta}=\lambda  \bigr \}$$
with $\overline B^{op}=\overline B \otimes_{F,c} F$ where $c=*_{|F}$ is the complex conjugation and
$\sharp_\beta$ the involution
$x \mapsto x^{\sharp_\beta}=\beta x^* \beta^{-1}$. For $p=uu^c$ decomposed in $E$, we have
$$\overline G(\Qm_p) \simeq \Qm_p^\times \times \prod_{w | u} (\overline B^{op}_v)^\times$$
where $w$ describes the places of $F$ above $u$. We suppose:
\begin{itemize}
\item the associated unitary group $\overline G_0(\Rm)$ being compact,

\item for any place $x$ of $\Qm$ inert or ramified in $E$, then $G(\Qm_x)$ is quasi-split.

\item There exists a place $v_0$ of $F$ above $u$ such that $\overline B_{v_0} \simeq D_{v_0,d}$
is the central division algebra over the completion $F_{v_0}$ of $F$ at $v_0$, with invariant $\frac{1}{d}$.
\end{itemize}
Fix a prime number $l \neq p$  and consider
a finite set $S$ of places of $F$ containing the ramification places $\bad$ of $\overline B$.
Let denote $\Tm_S/\overline \Zm_l$ the unramified Hecke algebra of $G$ outside $S$. 
For a cohomological minimal prime 
ideal $\widetilde{\mathfrak m}$ of $\Tm_S$, we can associate both a
near equivalence class of $\overline \Qm_l$-automorphic representation $\Pi_{\widetilde{\mathfrak m}}$ 
and a Galois representation
$$\rho_{\widetilde{\mathfrak m}}:G_F:=\gal(\bar F/F) \longrightarrow GL_d(\overline \Qm_l)$$ 
such that the eigenvalues of the Frobenius morphism at an unramified place $w$ are given by the 
Satake's parameter of the local component $\Pi_{\widetilde{\mathfrak m},w}$ of  
$\Pi_{\widetilde{\mathfrak m}}$.The semi-simple class of the reduction modulo $l$ of 
$\rho_{\widetilde{\mathfrak m}}$ depends only of the maximal
ideal $\mathfrak m$ of $\Tm$ containing $\widetilde{\mathfrak m}$.

\begin{conj*} \textbf{(Generalized Ihara's lemma by Clozel-Harris-Taylor)} \label{conj-ihara}
Consider
\begin{itemize}
\item an open compact subgroup $\overline U$ of $\overline G(\Am)$  such that outside $S$, its
local component is the maximal compact subgroup;

\item a place $w_0 \not \in S$ decomposed in $E$;

\item a maximal $\mathfrak m$ of $\Tm_S$ such that
$\overline \rho_{\mathfrak m}$ is absolutely irreducible. 
\end{itemize}
Let $\bar \pi$ be an irreducible sub-representation of 
$\mathcal C^\oo(\overline G(\Qm) \backslash \overline G(\Am)/ \overline U^{w_0},
\overline \Fm_l)_{\mathfrak m}$, where $\overline U=\overline U_{w_0} \overline U^{w_0}$, 
then its local component $\bar \pi_{w_0}$ at $w_0$ is generic.
\end{conj*}

\rem In its classical version for $GL_2$, Ihara's lemma is used to rise the modularity property between
some congruent galoisian representations and so was the role of this higher dimensional version in
Clozel-Harris-Taylor paper on the Sato-Tate conjecture. 
Shortly after Taylor found an argument to avoid Ihara's lemma.
However this conjecture remains highly interesting, see for example works of Clozel-Thorne \cite{clo-thor}, or 
Emerton-Helm \cite{emer-helm}. 

The main result of this paper is the following instances of the previous conjecture.

\begin{thm*} 
The previous generalized Ihara's conjecture is true if the maximal ideal $\mathfrak m$ verifies the 
following extra properties.
\begin{itemize}

\item[(H1)] $\overline \rho_{\mathfrak m}$ is absolutely irreducible and $l \geq d+1$ except\footnote{cf. the main result of \cite{boyer-bordeaux}} 
maybe for a finite number of exceptions depending of the extension $F/\Qm$.

\item[(H2)] The image of $\overline \rho_{\mathfrak m,w_0}$ in its Grothendieck group
is multiplicity free and\footnote{Using the main result of \cite{boyer-local-ihara} we could take off
the condition about not containing a full Zelevinsky line, 
cf. the last remark of \S \ref{para-proof}.} does not contain any full Zelevinsky line.\footnote{In 
particular $q_{w_1}$ can't  be congruent to $1$ modulo $l$.}

\item[(H3)] The image of $\overline \rho_{\mathfrak m,v_0}$ which by Jacquet-Langlands 
correspondence is associated to some superSpeh, $\speh_s(\varrho_{v_0})$, 
for some supercuspidal
$\overline \Fm_l$-representation $\varrho_{v_0}$ of $GL_g(F_{v_0}$ with $d=sg$,
cf. theorem 3.1.4 of \cite{dat-jl}. We then suppose that the set, cf. the notation in \S \ref{para-jl}
$$\bigl \{ \varrho_{v_0},\varrho_{v_0} \{ 1 \}, \cdots ,\varrho_{v_0} \{ s-1 \} \bigr \}$$ 
is of cardinal $s$.

\end{itemize}
\end{thm*}

\rem Concerning (H1), you can replace it by any hypothesis which insures the freeness of the 
cohomology of the Kottwitz-Harris-Taylor Shimura variety of \S \ref{para-shimura}. In \cite{boyer-imj}
we gave two such conditions, the one given by (H1) is proved in \cite{boyer-bordeaux}. We give
more details before theorem \ref{theo2}.

\rem About (H2) note that the first condition also appears in section 4.5 of \cite{CHT}
in the statements of level raising. Concerning the second condition of (H2), using the main
result of \cite{boyer-local-ihara}, we can remove it, cf. the remark after the proof of 
lemma \ref{lem-ss}.

\medskip

To prove this result, we first translate such a property to 
the cohomology group of middle degree of the Kottwitz-Harris-Taylor Shimura variety
$X_{\IC}$ associated to the similitude group $G/\Qm$ such that
\begin{itemize}
\item $G(\Am^{\oo})=\overline G(\Am^{\oo,p}) \times GL_d(F_{v_0}) \times
\prod_{\atop{w | u}{w \neq v_0}} (\overline B_{w}^{op})^\times$,

\item the signatures of $G(\Rm)$ are $(1,d-1) \times (0,d) \times \cdots \times (0,d)$.
\end{itemize}
In particular to each prime ideal $\widetilde{\mathfrak m}$ of $\Tm_S$, 
is associated a $\overline \Qm_l$-irreducible
automorphic representation $\Pi_{\widetilde{\mathfrak m}}$ of $G(\Am_\Qm)$ whose Satake
parameters at finite places outside $S$ are prescribed by $\widetilde{\mathfrak m}$.
We then compute the cohomology groups of the geometric generic fiber of $X_\IC$
through the spectral sequence of vanishing cycles at the place $v_0$.
Thanks to (H1), we know, cf. \cite{boyer-bordeaux}, 
the $H^{i}(X_U,\overline \Zm_l)_{\mathfrak m}$ to be
free and so, $H^{i}(X_U,\overline \Fm_l)_{\mathfrak m}=(0)$ for $i \neq d-1$.

\medskip

\rem Moreover (H2) (resp. (H3)) insures that the graduates of the filtration of
$H^{d-1}(X_U,\overline \Fm_l)_{\mathfrak m}$, given by the entire version of the weight-monodromy
filtration, at the place $w_0$ (resp. $v_0$) are also free.

\medskip

The contribution of the supersingular points of the special fiber at $v_0$, using (H3),
allows us to associate to
an irreducible sub-representation $\overline \pi$ of 
$\mathcal C^\oo(\overline G(\Qm) \backslash \overline G(\Am)/ \overline U^{w_0},
\overline \Fm_l)_{\mathfrak m}$, an irreducible sub-representation $\pi$ of
$H^{d-1}(X_U,\overline \Fm_l)_{\mathfrak m}$, such that $\pi^{\oo,v_0} \simeq \bar \pi^{\oo,v_0}$. 
We then try to prove the genericness of $\pi_{w_0}$  by proving, using (H2), such genericness
property of irreducible submodules of $H^{d-1}(X_U,\overline \Fm_l)_{\mathfrak m}$.
On ingredient in \S \ref{para-reseau}, comes from \cite{scholze-LT} \S 5, 
where the hypothesis that $\overline \rho_{\mathfrak m}$ is absolutely 
irreducible, insures that the lattices of isotypic components of
$H^{d-1}(X_U,\overline \Qm_l)_{\mathfrak m}$ given by the $\overline \Zm_l$-cohomology,
can be written as a tensorial product of stable lattices for the $G(\Am^{\oo})$ and the Galois actions.

Finally (H2) is needed to control the combinatorics.

\medskip

\rem As pointed out to us by M. Harris, the case where the cardinal
$q_{w_0}$ of the residue field at $w_0$, is congruent to $1$ modulo $l$ should be of crucial
importance for the application. Meanwhile our strategy relies on the construction of
a filtration of $H^{d-1}(X_U,\overline \Fm_l)_{\mathfrak m}$ which each graduates verify
the genericness property of irreducible submodule and where these graduates are parabolically
induced. When $q_{w_0} \equiv 1 \mod l$, parabolically
induced $\overline \Fm_l$-representations are often semi-simple and so they can't verify such
genericness property of irreducible submodule. It seems that our approach is not well 
adapted to treat this fundamental case.

\medskip

To state our application to level rising, denote $\SC_{w_0}(\mathfrak m)$ the supercuspidal
support of the modulo $l$ reduction of $\Pi_{\widetilde{\mathfrak m},w_0}$ for any prime ideal
$\widetilde{\mathfrak m} \subset \mathfrak m$: it depends only on $\mathfrak m$. By (H2)
this support is multiplicity free and we first break it
$\SC_{w_0}(\mathfrak m)=\coprod_{\varrho} \SC_{\varrho}(\mathfrak m)$
according to the inertial equivalence classes $\varrho$ of irreducible $\overline \Fm_l$-representations
of some $GL_{g(\varrho)}(F_{w_0})$ with $1 \leq g(\varrho) \leq d$. For any such $\varrho$ we then denote 
$l_1(\varrho) \geq \cdots \geq l_{r(\varrho)}(\varrho) \geq 1$,
such that $S_{\varrho}(\mathfrak m)$ can be written as the union of $r(\varrho)$ Zelevinsky 
unlinked segments of length $l_i(\varrho)$
$$[\varrho \nu^k_i,\bar \rho \nu^{k+l_i(\varrho)}]=\bigl \{ \varrho \nu^k,\varrho \nu^{k+1},\cdots,
\varrho \nu^{k+l_i(\varrho)} \bigr \}.$$
Then for any minimal prime ideal
$\widetilde{\mathfrak m} \subset \mathfrak m$, and $\Pi \in \Pi_{\widetilde{\mathfrak m}}$,
we write its local component
$\Pi_{w_0} \simeq \bigtimes_{\varrho} \Pi_{w_0}(\varrho)$
and $\Pi_{w_0}(\varrho) \simeq \bigtimes_{i=1}^{r(\varrho)} \Pi_{w_0}(\varrho,i)$
where for each $1 \leq i \leq r(\varrho)$, the modulo $l$ reduction of the supercuspidal support
of $\Pi_{w_0}(\varrho,i)$ is, with the notations of the previous section, those of the Zelevinsky segment 
$[\varrho \nu^{\delta_i},\varrho \nu^{\delta_i+l_i(\varrho)}]$.

\begin{prop*} 
Take a maximal ideal $\mathfrak m$ verifying the hypothesis (H1) and (H2).
Let $\varrho_0$ such that $\SC_{\varrho_0}(\mathfrak m)$ is non empty and consider 
$1 \leq i \leq r(\varrho_0)$. Then there exists a minimal prime ideal 
$\widetilde{\mathfrak m} \subset \mathfrak m$ and an automorphic
representation $\Pi \in \Pi_{\widetilde{\mathfrak m}}$ such that with the previous notation
$\Pi_{w_0}(\varrho_0,i)$ is non degenerate, i.e. isomorphic to $\st_{l_i(\varrho)}(\pi_{w_0})$
for some irreducible cuspidal $\overline \Qm_l$-representation $\pi_{w_0}$.
\end{prop*}

In particular when there is only one segment, which is always the case for $GL_2$, then the result is optimal.

\medskip

\rem In the previous proposition, we could also prove that for any such $\widetilde{\mathfrak m}$
and any $\Pi \in \Pi_{\widetilde{\mathfrak m}}$, then $\Pi_{w_0}(\varrho_0,i)$ is non degenerate,
which looks similar to theorem 2.1 of \cite{allen-newton} where $\overline \rho_{\mathfrak m}$ 
is supposed to be absolutely irreducible and decomposed generic which also imposes
the cohomology groups to be free.

\mainmatter

\renewcommand{\theequation}{\arabic{section}.\arabic{subsection}.\arabic{smfthm}}

\section{Shimura variety of Kottwitz-Harris-Taylor type}

\subsection{Geometry}
\label{para-shimura}

Recall from the introduction that a prime number $l$ is fixed, distinct from all others prime
numbers which will be considered in the following. Let $F=F^+E$ be a CM field with $E/\Qm$
imaginary quadratic such that $l$ is unramified, and $F^+$ totally real with a fixed embedding 
$\tau:F^+ \hookrightarrow \Rm$. For a place $v$ of $F$, we denote $F_v$ the completion
of $F$ at $v$, with ring of integers $\OC_v$, uniformizer $\varpi_v$ and residual field
$\kappa(v)$ with cardinal $q_v$.

Let $B$ be a central division algebra over $F$ of dimension $d^2$ such that at any place $x$
of $F$, either $B_x$ is split either it's a division algebra. We moreover suppose the existence of
an involution of second specie $*$ on $B$ such that $*_{|F}$ is the complex conjugation $c$.
For $\beta \in B^{*=-1}$, we denote $\sharp_\beta:x \mapsto \beta x^* \beta^{-1}$ and let $G/\Qm$
such that for any $\Qm$-algebra $R$, 
$$G(R) = \bigl \{ (\lambda,g) \in R^\times \times (B^{op} \otimes_\Qm R)^\times \hbox{ such that }
gg^{\sharp_\beta}=\lambda \bigr \},$$
with $B^{op}=B \otimes_{c} F$. If $x=yy^c$ is split in $E$ then
$$G(\Qm_x) \simeq (B^{op}_y)^\times \times Q_x^\times \simeq \Qm_x^\times \times \prod_{z_i}
(B_{z_i}^{op})^\times$$
where, identifying the places of $F^+$ above $x$ with those of $F$ above $y$, we write $x=\prod_i z_i$.
Moreover we can impose that
\begin{itemize}
\item if $x$ is inert in $E$ then $G(\Qm_x)$ is quasi-split,

\item the signature of $G(\Rm)$ are $(1,d-1) \times (0,d) \times \cdots \times (0,d)$.
\end{itemize}
With the notations of the introduction we have
$$G(\Am^{\oo})=\overline G(\Am^{\oo,p}) \times \Bigl ( \Qm_{p_{v_0}}^\times 
GL_d(F_{v_0}) \times \prod_{\atop{w | u}{w \neq v_0}} 
(\overline B_{w}^{op})^\times \Bigr ).$$

\begin{defi}
We denote $\bad$ the set of places $w$ of $F$ such that $B_w$ is non split. Let $\spl$
the set $w$ of finite places of $F$ not in $\bad$ such that $w_{|\Qm}$ is split in $E$. For such
a place $w$ with $p=w_{|\Qm}$, we write abusively
$$G(\Am^{w})=G(\Am^{p}) \times \Qm_p^\times \times \prod_{\atop{u|p}{u \neq w}} (B_u^{op})^\times,$$
and $G(F_w)=GL_d(F_w)$.
\end{defi}

\rem With the notations of the introduction, the role of $w$ in the previous definition will be taken by
$v_0$, $v_1$ or $w_0$.

\begin{nota}
For all open compact subgroup $U^p$ of $G(\Am^{\oo,p})$ and 
$m=(m_1,\cdots,m_r) \in \Zm_{\geq 0}^r$, we consider
$$U^p(m)=U^p \times \Zm_p^\times \times \prod_{i=1}^r \ker ( \OC_{B_{v_i}}^\times 
\longrightarrow (\OC_{B_{v_i}}/\PC_{v_i}^{m_i})^\times ).$$
For $w_0$ one of the $v_i$ and $n \in \Nm$, we also introduce $U^{w_0}(n):=U^p(0,\cdots,0,n,0,\cdots,0)$.
\end{nota}

We then denote $\IC$ for the set of these $U^p(m)$ such that there exists a place
$x$ for which the projection from $U^p$ to $G(\Qm_x)$ doesn't contain any element
with finite order except the identity, cf. \cite{h-t} bellow of page 90. 

Attached to each $I \in \IC$ is a Shimura variety $X_I \rightarrow \spec \OC_v$ 
of type Kottwitz-Harris-Taylor and we denote $X_\IC=(X_I)_{I \in \IC}$ the projective
system: recall that the transition morphisms
$r_{J,I}:X_J \rightarrow X_I$ are finite flat and even etale when $m_1(J)=m_1(I)$.
This projective system is then equipped with a Hecke action of $G(\Am^\oo) \times \Zm$, 
where the action of $z$ in the Weil group $W_v$ of $F_v$ is given par
$-\deg(z) \in \Zm$ where $\deg=\val \circ \art_v^{-1}$, where $\art_v^{-1}:W_v^{ab} \simeq F_v^\times$
is the Artin isomorphism which sends geometric Frobenius to uniformizers.

\begin{notas} (cf. \cite{boyer-invent2} \S 1.3) \label{nota-strate}
Let $I \in \IC$, 
\begin{itemize}
\item the special fiber of $X_I$ will be denoted $X_{I,s}$ and its geometric
special fiber $X_{I,\bar s}:=X_{I,s} \times \spec \overline \Fm_p$.

\item For $1 \leq h \leq d$, let $X_{I,\bar s}^{\geq h}$ (resp. $X_{I,\bar s}^{=h}$)
be the closed (resp. open) Newton stratum of height $h$, defined as the subscheme
where the connected component of the universal Barsotti-Tate group is of rank 
greater or equal to $h$ (resp. equal to $h$).
\end{itemize}
\end{notas}

\noindent \textit{Remark}: $X_{\IC,\bar s}^{\geq h}$ is of pure dimension $d-h$.
For $1 \leq h < d$, the Newton stratum $X_{\IC,\bar s}^{=h}$ is geometrically induced
under the action of the parabolic subgroup $P_{h,d-h}(F_v)$, defined as the
stabilizer of the first $h$ vectors of the canonical basis of $F_v^d$. Concretly this means there
exists a closed subscheme $X_{I,\bar s,\overline{1_h}}^{=h}$ stabilized by the Hecke 
action of $P_{h,d-h}(F_v)$ and such that
$$X_{\IC,\bar s}^{=h} \simeq X_{\IC,\bar s,\overline{1_h}}^{=h} 
\times_{P_{h,d-h}(F_v)} GL_d(F_v).$$

\begin{notas} \label{nota-j}
Let denote
$$i^h:X^{\geq h}_{\IC,\bar s} \hookrightarrow X^{\geq 1}_{\IC,\bar s}, \quad
j^{\geq h}: X^{=h}_{\IC,\bar s} \hookrightarrow X^{\geq h}_{\IC,\bar s}$$
and $j^{=h}=i^h j^{\geq h}$.
\end{notas}

\subsection{Jacquet-Langlands correspondence and $\varrho$-type}
\label{para-jl}

For a representation $\pi_v$ of $GL_d(F_v)$ and $n \in \frac{1}{2} \Zm$, set 
$\pi_v \{ n \}:= \pi_v \otimes q_v^{-n \val \circ \det}$. Recall that
the normalized induction of two representations $\pi_{v,1}$ and $\pi_{v,2}$ 
of respectively $GL_{n_1}(F_v)$ and $GL_{n_2}(F_v)$ is
$$\pi_1 \times \pi_2:=\ind_{P_{n_1,n_1+n_2}(F_v)}^{GL_{n_1+n_2}(F_v)}
\pi_{v,1} \{ \frac{n_2}{2} \} \otimes \pi_{v,2} \{-\frac{n_1}{2} \}.$$
A representation
$\pi_v$ of $GL_d(F_v)$ is called \emph{cuspidal} (resp. \emph{supercuspidal})
if it's not a subspace (resp. subquotient) of a proper parabolic induced representation.
When the field of coefficients is of characteristic zero then these two notions coincides,
but this is no more true for $\overline \Fm_l$.

\rem The modulo $l$ reduction of an irreducible $\overline \Qm_l$-representation is still irreducible
and cuspidal but not necessary supercuspidal. In this last case, its supercuspidal support is a
Zelevinsky segment associated to some unique inertial equivalent classe $\varrho$, where
$\varrho$ is an irreducible supercuspidal $\overline \Fm_l$-representation. Thanks to (H2) we will
not be concerned by this subtility.

\begin{defi} We say that $\pi_v$ is of type $\varrho$ when the supercuspidal support of its modulo
$l$ reduction is contained in the Zelevinsky line of $\varrho$.
\end{defi}

\begin{defin} \label{defi-rep} (see \cite{zelevinski2} \S 9 and \cite{boyer-compositio} \S 1.4)
Let $g$ be a divisor of $d=sg$ and $\pi_v$ an irreducible cuspidal 
$\overline \Qm_l$-representation of $GL_g(F_v)$. Then the normalized induced representation 
$$\pi_v\{ \frac{1-s}{2} \} \times \pi_v \{ \frac{3-s}{2} \} \times \cdots \times \pi_v \{ \frac{s-1}{2} \}$$ 
holds a unique irreducible quotient (resp. subspace) denoted $\st_s(\pi_v)$ (resp.
$\speh_s(\pi_v)$); it's a generalized Steinberg (resp. Speh) representation.
\end{defin}

\rem If $\chi_v$ is a character of $F_v^\times$ then $\speh_s(\chi_v)=\chi_v \circ \det$.

The local Jacquet-Langlands correspondance is a bijection between irreducible essentially square
integrable representations of $GL_d(F_v)$, i.e. representations of the type $\st_s(\pi_v)$
for $\pi_v$ cuspidal, and irreducible representations of $D_{v,d}^\times$ where
$D_{v,d}$ is the central division algebra over $F_v$ with invariant $\frac{1}{d}$. 

\begin{nota}
We will denote $\pi_v[s]_D$ the irreductible representation of $D_{v,d}^\times$ associated 
to $\st_s(\pi_v^\vee)$ by the local Jacquet-Langlands correspondance.
\end{nota}

We denote $\RC_{ \overline \Fm_l}(d)$ the set of equivalence classes of irreducible
$\overline \Fm_l$-representations of $D_{v,d}^\times$. For $\bar \tau \in \RC_{ \overline \Fm_l}(d)$,
let $\CC_{\bar \tau}$ be the sub-category of smooth $\Zm_l^{nr}$-representations of 
$D_{v,d}^\times$ with objects whose irreducible sub-quotients are isomorphic
to a sub-quotient of $\bar \tau_{|\DC_{v,d}^\times}$. Note that
$\CC_{\bar \tau}$ is a direct factor inside $\rep_{\Zm_l^{nr}}^\oo(D_{v,d}^\times)$ so that
every $\Zm_l^{nr}$-representation $V_{\Zm_l^{nr}}$ of $D_{v,d}^\times$
can be decomposed as a direct sum
$$V_{\Zm_l^{nr}} \simeq \bigoplus_{\bar \tau \in \RC_{ \overline \Fm_l}(d)} V_{\Zm_{l,\bar \tau}^{nr}}$$
where $V_{\Zm_{l,\bar \tau}^{nr}}$ is an object of $\CC_{\bar \tau}$.

Let $\pi_v$ be an irreducible cuspidal representation of $GL_g(F_v)$ and fix an integer
$s \geq 1$. Then the modulo $l$ reduction of $\speh_{s}(\pi)$ is irreducible,  cf. \cite{dat-jl} \S 2.2.3. 

\begin{nota}
When the modulo $l$ reduction of $\pi$, denoted $\varrho$, is supercuspidal,
then we will denote $\speh_s(\varrho)$ the modulo $l$ reduction of $\speh_s(\pi)$:
we call it a $\overline \Fm_l$- superSpeh representation. 
\end{nota}

By \cite{dat-jl} 3.1.4, we have a bijection
\begin{multline} \label{eq-bij-type}
\Bigl \{ \bar \Fm_{l}-\hbox{superspeh irreducible representations of  }GL_{d}(F_v) \Bigr \} \\ \simeq \\
\Bigl \{ \bar \Fm_{l}-\hbox{representations irreducible of } D^{\times}_{v,d} \Bigr \} 
\end{multline}
compatible with the modulo $l$ reduction in the sense that if $\pi_v$ is a lifting of $\varrho$,
then the modulo $l$ reduction of $\pi^\vee[s]_{D}$ matches through the previous bijection, with 
the superSpeh $\speh_{s}(\varrho)$.

\begin{defi} \label{defi-rho-type}
A $\overline \Fm_l$-representation of $D^\times_{v,d}$ (resp. an irreducible cuspidal
representation of $GL_d(F_v)$) is said of type $\varrho$ if
all its irreducible sub-quotient are, through the previous bijection, associated to some superSpeh 
$\speh_s(\varrho)$ (resp. its supercuspidal support belongs to the Zelevinsky line of $\varrho$).
\end{defi}

Recall that if $\epsilon(\varrho)$ is the cardinal of the Zelevinsky line associated to $\varrho$ and, 
cf. \cite{vigneras-induced} p.51, then let
$$m(\varrho)=\left\{ \begin{array}{ll} \epsilon(\varrho), & \hbox{si } \epsilon(\varrho)>1; \\ l, & \hbox{sinon.} \end{array} \right.$$

\begin{nota}
Let $r(\varrho)$ the biggest integer $i$ such that $l^i$ divides $\frac{d}{m(\varrho)g}$ and
define 
$$g_{-1}(\varrho)=g \quad \hbox{ and } \quad \forall 0 \leq i \leq r(\varrho),~
g_i(\varrho)=m(\varrho)l^i g.$$
We also denote $s_i(\varrho):=\lfloor \frac{d}{g_i(\varrho)} \rfloor$.
\end{nota}

Then if $\pi_v$ is an irreducible cuspidal $\overline \Qm_l$-representation of $GL_k(F_v)$ with
type $\varrho$, then there exists $i$ such that $k=g_i$. We say that $\pi_v$ is of $\varrho$-type $i$
and we denote $\scusp_i(\varrho)$ the set of inertial equivalence classes of irreducible
cuspidal representations of $\varrho$-type $i$.

\begin{nota}
For $\varrho$ an irreducible supercuspidal $\overline \Fm_l$- representation of $GL_g(F_v)$,
we denote $\RC_\varrho=\coprod_{s \geq 1} \RC_\varrho(sg)$ where $\RC_\varrho(sg)$
is the set of equivalence classes of irreducible $\overline \Fm_l$-representations
of $D_{v,sg}^\times$ of type $\varrho$.
\end{nota}

\subsection{Harris-Taylor local systems}

Let $\pi_v$ be an irreducible cuspidal $\overline \Qm_l$-representation of $GL_g(F_v)$ and
fix $t \geq 1$ such that $tg \leq d$. Thanks to Igusa varieties, Harris and Taylor
constructed a local system on  $X^{=tg}_{\IC,\bar s,\overline{1_h}}$
$$\LC_{\overline \Qm_l}(\pi_v[t]_D)_{\overline{1_h}}=\bigoplus_{i=1}^{e_{\pi_v}} 
\LC_{\overline \Qm_l}(\rho_{v,i})_{\overline{1_h}}$$
where $(\pi_v[t]_D)_{|\DC_{v,h}^\times}=\bigoplus_{i=1}^{e_{\pi_v}} \rho_{v,i}$ 
with $\rho_{v,i}$ irreductible. The Hecke action of $P_{tg,d-tg}(F_v)$ is then given
through its quotient $GL_{d-tg} \times \Zm$. These local systems have stable
$\overline \Zm_l$-lattices and we will write simply $\LC(\pi_v[t]_D)_{\overline{1_h}}$
for any $\overline \Zm_l$-stable lattice that we don't want to specify.

\begin{notas} For $\Pi_t$ any representation of $GL_{tg}$ and 
$\Xi:\frac{1}{2} \Zm \longrightarrow \overline \Zm_l^\times$ defined by 
$\Xi(\frac{1}{2})=q^{1/2}$, we introduce
$$\widetilde{HT}_1(\pi_v,\Pi_t):=\LC(\pi_v[t]_D)_{\overline{1_h}} 
\otimes \Pi_t \otimes \Xi^{\frac{tg-d}{2}}$$
and its induced version
$$\widetilde{HT}(\pi_v,\Pi_t):=\Bigl ( \LC(\pi_v[t]_D)_{\overline{1_h}} 
\otimes \Pi_t \otimes \Xi^{\frac{tg-d}{2}} \Bigr) \times_{P_{tg,d-tg}(F_v)} GL_d(F_v),$$
where the unipotent radical of $P_{tg,d-tg}(F_v)$ acts trivially and the action of
$$(g^{\oo,v},\left ( \begin{array}{cc} g_v^c & *†\\ 0 & g_v^{et} \end{array} \right ),\sigma_v) 
\in G(\Am^{\oo,v}) \times P_{tg,d-tg}(F_v) \times W_v$$ 
is given
\begin{itemize}
\item by the action of $g_v^c$ on $\Pi_t$ and 
$\deg(\sigma_v) \in \Zm$ on $ \Xi^{\frac{tg-d}{2}}$, and

\item the action of $(g^{\oo,v},g_v^{et},\val(\det g_v^c)-\deg \sigma_v)
\in G(\Am^{\oo,v}) \times GL_{d-tg}(F_v) \times \Zm$ on $\LC_{\overline \Qm_l}
(\pi_v[t]_D)_{\overline{1_h}} \otimes \Xi^{\frac{tg-d}{2}}$.
\end{itemize}
We also introduce
$$HT(\pi_v,\Pi_t)_{\overline{1_h}}:=\widetilde{HT}(\pi_v,\Pi_t)_{\overline{1_h}}[d-tg],$$
and the perverse sheaf
$$P(t,\pi_v)_{\overline{1_h}}:=j^{=tg}_{\overline{1_h},!*} HT(\pi_v,\st_t(\pi_v))_{\overline{1_h}} 
\otimes \Lm(\pi_v),$$
and their induced version, $HT(\pi_v,\Pi_t)$ and $P(t,\pi_v)$, where 
$$j={h}=i^h \circ j^{\geq h}:X^{=h}_{\IC,\bar s} \hookrightarrow
X^{\geq h}_{\IC,\bar s} \hookrightarrow X_{\IC,\bar s}$$ 
and $\Lm^\vee$ is the local Langlands correspondence.
\end{notas}

\rem Recall that $\pi'_v$ is said inertially equivalent to $\pi_v$ 
if there exists a character $\zeta: \Zm \longrightarrow  \overline \Qm_l^\times$ such that
$\pi'_v \simeq \pi_v \otimes (\zeta \circ \val \circ \det)$.
Note, cf. \cite{boyer-invent2} 2.1.4, that $P(t,\pi_v)$ depends only on the inertial class of $\pi_v$ and
$$P(t,\pi_v)=e_{\pi_v} \PC(t,\pi_v)$$ 
where $\PC(t,\pi_v)$ is an irreducible perverse sheaf. When we want to speak of
the $\overline \Qm_l$-versions we will add it on the notations.

\begin{defi} We will say that $HT(\pi_v,\Pi_t)$ or $\PC(t,\pi_v)$ is of type $\varrho$
if $\pi_v$ is.
\end{defi}

\begin{lemm} \label{lem-coho-flou}
If $\rho \otimes \sigma$ is a $GL_d(F_v) \times W_v$ equivariant irreducible sub-quotient
of $H^i(X_{\IC,\bar s_v},\PC(\pi_v,t) \otimes_{\overline \Zm_l} \overline \Fm_l)$ then
\begin{itemize}
\item $\sigma$ is an irreducible sub-quotient of the modulo $l$ reduction of $\Lm(\pi_v \otimes \chi_v)$,
where $\chi_v$ is an uramified character of $F_v$, and

\item $\rho$ is an irreducible sub-quotient of the modulo $l$ reduction of a induced representation
of the following shape $\st_t(\pi_v \otimes \chi_v) \times \psi_v$ where $\psi_v$ 
is an entire irreducible representation of $GL_{d-tg}(F_v)$.
\end{itemize}
\end{lemm}

\begin{proof} 
The result follows directly from the description of the actions given previously.
\end{proof}

As usually for $\sigma$ a representation of $W_v$ and $n \in \frac{1}{2} \Zm$, we will denote $\sigma(n)$
the twisted representation $g \mapsto \sigma(g) |\art^{-1}_v(g)|^n$ where $|-|$ is the absolute value of $F_v$.

\subsection{Free perverse sheaf}

Let $S=\spec \Fm_q$ and $X/S$ of finite type, then the usual $t$-structure on
$\DC(X,\overline \Zm_l):=D^b_c(X,\overline \Zm_l)$ is
$$\begin{array}{l}
A \in \lexp p \DC^{\leq 0}(X,\overline \Zm_l)
\Leftrightarrow \forall x \in X,~\hi^k i_x^* A=0,~\forall k >- \dim \overline{\{ x \} } \\
A \in \lexp p \DC^{\geq 0}(X,\overline \Zm_l) \Leftrightarrow \forall x \in X,~\
\hi^k i_x^! A=0,~\forall k <- \dim \overline{\{ x \} }
\end{array}$$
where $i_x:\spec \kappa(x) \hookrightarrow X$ and $\hi^k(K)$ is the $k$-th sheaf 
of cohomology of $K$.

\begin{nota} 
Let denote $\lexp p \CC(X,\overline \Zm_l)$ the heart of this $t$-structure with associated 
cohomology functors $\lexp p \hi^i$. For a functor $T$ we denote 
$\lexp p T:=\lexp p \hi^0 \circ T$.
\end{nota}

The category  $\lexp p \CC(X,\overline \Zm_l)$ is abelian equipped with a torsion theory 
$(\TC,\FC)$ where $\TC$ (resp. $\FC$) is the full subcategory of objects $T$ (resp. $F$) 
such that $l^N 1_T$ is trivial for some large enough $N$(resp. $l.1_F$ is a monomorphism). 
Applying Grothendieck-Verdier duality, we obtain
$$\begin{array}{l}
\lexp {p+} \DC^{\leq 0}(X,\overline \Zm_l):= \{ A \in \lexp p \DC^{\leq 1}(X,\overline \Zm_l):~
\lexp p \hi^1(A) \in \TC \} \\
\lexp {p+} \DC^{\geq 0}(X,\overline \Zm_l):= \{ A \in \lexp p \DC^{\geq 0}(X,\overline \Zm_l):~
\lexp p \hi^0(A) \in \FC \} \\
\end{array}$$
with heart$\lexp {p+} \CC(X,\overline \Zm_l)$  equipped with its torsion theory
$(\FC,\TC[-1])$.

\begin{defin} (cf. \cite{boyer-torsion} \S 1.3) \label{defi-FC}
Let
$$\FC(X,\overline \Zm_l):=\lexp p \CC(X,\overline \Zm_l) \cap \lexp {p+} \CC(X,\overline \Zm_l)
=\lexp p \DC^{\leq 0}(X,\overline \Zm_l) \cap \lexp {p+} \DC^{\geq 0}(X,\overline \Zm_l)$$
the quasi-abelian category of free perverse sheaves over $X$.
\end{defin}

\noindent \textit{Remark}: for an object $L$ of $\FC(X,\overline \Zm_l)$, 
we will consider filtrations
$$L_1 \subset L_2 \subset \cdots \subset L_e=L$$ 
such that for every $1 \leq i \leq e-1$, $L_i \hookrightarrow L_{i+1}$ is a
strict monomorphism, i.e. $L_{i+1}/L_i$ is an object of $\FC(X,\overline \Zm_l)$.

For a free $L \in \FC(X,\Lambda)$, we consider the following diagram
$$\xymatrix{ 
& L \ar[drr]^{\can_{*,L}} \\
\lexp {p+} j_! j^* L \ar[ur]^{\can_{!,L}} 
\ar@{->>}[r]|-{+} & \lexp p j_{!*}j^* L \ar@{^{(}->>}[r]_+ & 
\lexp {p+} j_{!*} j^* L \ar@{^{(}->}[r]|-{+} & \lexp p j_*j^* L
}$$
where below is, cf. the remark following 1.3.12 of \cite{boyer-torsion},
the canonical factorisation of $\lexp {p+} j_! j^* L \longrightarrow \lexp {p} j_* j^* L$
and where the maps $\can_{!,L}$ and $\can_{*,L}$ are given by the adjonction property.
Consider now $X$ equipped with a stratification 
$$X=X^{\geq 1} \supset X^{\geq 2} \supset \cdots \supset X^{\geq d,}$$
and let $L \in \FC(X,\overline \Zm_l)$.
For $1 \leq h <d$, let denote
$X^{1 \leq h}:=X^{\geq 1}-X^{\geq h+1}$ and $j^{1 \leq h}:X^{1†\leq h} \hookrightarrow
X^{\geq 1}$. We then define
$$\Fil^r_{!}(L):=\im_\FC \Bigl ( \lexp {p+} j^{1 \leq r}_! j^{1 \leq r,*}L \longrightarrow L\Bigr ),$$
which gives a filtration
$$0=\Fil^{0}_{!}(L) \subset \Fil^1_{!}(L) \subset \Fil^1_{!}(L) \cdots \subset \Fil^{d-1}_{!}(L) 
\subset \Fil^d_{!}(L)=L.$$
Dually
$\CoFil_{*,r}(L)=\coim_\FC\Bigl ( L \longrightarrow \lexp {p} j^{1 \leq r}_* j^{1 \leq r,*}L \Bigr ),$
define a cofiltration
\begin{multline*}
L = \CoFil_{\SF,*,d}(L) \twoheadrightarrow \CoFil_{\SF,*,d-1}(L) \twoheadrightarrow \cdots \\
\cdots
\twoheadrightarrow \CoFil_{\SF,*,1}(L) \twoheadrightarrow \CoFil_{\SF,*,0}(L)=0,
\end{multline*}
and a filtration
$$0=\Fil_{*}^{-d}(L) \subset \Fil_{*}^{1-d}(L) \subset \cdots \subset \Fil_{*}^0(L)=L$$
where $\Fil_{*}^{-r}(L):=\ker_\FC \bigl ( L \twoheadrightarrow \CoFil_{*,r}(L) \bigr ).$

\rem These two constructions are exchanged by Grothendieck-Verdier duality, 
$D \Bigl ( \CoFil_{\SF,!,-r}(L) \Bigr ) \simeq \Fil^{-r}_{\SF,*}( D(L))$ and
$D \Bigl ( \CoFil_{\SF,*,r}(L) \Bigr ) \simeq \Fil^{r}_{\SF,!}( D(L)).$

We can also refine the previous filtrations,
cf. \cite{boyer-torsion} proposition 2.3.3, to obtain exhaustive filtrations
\addtocounter{thm}{1}
\begin{multline} \label{eq-fill}
0=\Fill^{-2^{d-1}}_{!}(L) \subset \Fill^{-2^{d-1}+1}_{!}(L) \subset \cdots  \\
\cdots \subset
\Fill^0_{!}(L) \subset \cdots \subset \Fill^{2^{d-1}-1}_{!}(L)=L,
\end{multline}
such that the graduate $\grr^k(L)$ are simple over $\overline \Qm_l$, 
i.e. verify $\lexp p j^{= h}_{!*} j^{= h,*} \grr^k(L)  \htarrow_+ \grr^k(L)$
for some $h$.
Dually we can construct a cofiltration
$$L=\CoFill_{*,2^{d-1}}(L) \twoheadrightarrow \CoFill_{*,2^{d-1}-1}(L)
\twoheadrightarrow \cdots \twoheadrightarrow \CoFill_{*,-2^{d-1}}(L)=0$$
and a filtration $\Fill_{*}^{-r}(L):=\ker_\FC \bigl ( L \twoheadrightarrow \CoFill_{*,r}(L) \bigr )$.

\subsection{Vanishing cycles perverse sheaf}
\label{para-entier}

\begin{nota}
For $I \in \IC$, let 
$$\Psi_{I,\Lambda}:=R\Psi_{\eta_v,I}(\Lambda[d-1])(\frac{d-1}{2})$$
be the vanishing cycle autodual perverse sheaf on $X_{I,\bar s_v}$.
When $\Lambda=\overline \Zm_l$, we will simply write $\Psi_\IC$.
\end{nota}

Recall the following result of \cite{h-t} relating $\Psi_{\IC}$ with Harris-Taylor local systems.

\begin{propo} \label{prop-fbartau} 
(cf. \cite{h-t} proposition IV.2.2 and \S 2.4 of \cite{boyer-invent2}) \\
There is an isomorphism 
$G(\Am^{\oo,v}) \times P_{h,d-h}(F_v) \times W_v$-equivariant
$$\ind_{(D_{v,h}^\times)^0 \varpi_v^\Zm}^{D_{v,h}^\times} 
\bigl ( \hi^{h-d-i} \Psi_{\IC,\overline \Zm_l} \bigr )_{|X^{=h}_{\IC,\bar s,\overline{1_h}}} \simeq
\bigoplus_{\bar \tau \in \RC_{ \overline \Fm_l}(h)} \LC_{\overline \Zm_l,\overline{1_h}}
(\UC^{h-1-i}_{\bar \tau,\Nm}),$$
where
\begin{itemize}
\item $\RC_{ \overline \Fm_l}(h)$ is the set of equivalence classes of irreducible
$\overline \Fm_l$-representations of $D_{v,h}^\times$;

\item for $\bar \tau \in \RC_{ \overline \Fm_l}(h)$ and $V$ a 
$\overline \Zm_l$-representation of $D_{v,h}^\times$, then $V_{\bar \tau}$
denotes, cf. \cite{dat-torsion} \S B.2, the direct factor of $V$ whose irreducible
subquotients are isomorphic to a subquotient of $\bar \tau_{|\DC^\times_{v,h}}$
where $\DC_{v,h}$ is the maximal order of $D_{v,h}$.

\item With the previous notation, $\UC^i_{\bar \tau,\Nm}:= 
\big ( \UC^i_{F_v,\overline \Zm_l,d} \big )_{\bar \tau}$.

\item The matching between the system indexed by $\IC$ and those by $\Nm$
is given by the application $m_1:\IC \longrightarrow \Nm$.
\end{itemize}
\end{propo}

\rem For $\bar \tau \in \RC_{\overline \Fm_l}(h)$, and a lifting $\tau$
which by Jacquet-Langlands correspondence can be written $\tau \simeq \pi[t]_D$
for $\pi$ irreductible cuspidal, let $\varrho \in \scusp_{\overline \Fm_l}(g)$
be in the supercuspidal support. Then the inertial class of $\varrho$ depends only on
$\bar \tau$ and we will use the following notation.

\begin{nota}
With the previous notation, we denote $V_\varrho$ for $V_{\bar \tau}$. 
\end{nota}

\rem 
$\Psi_{\IC, \overline \Zm_l}$ is an object of $\FC(X_{\IC,\bar s}, \overline \Zm_l)$.
Indeed, by \cite{ast} proposition 4.4.2,
$\Psi_{\IC, \overline \Zm_l}$ is an object of 
$\lexp p \DC^{\leq 0}(X_{\IC,\bar s},\overline \Zm_l)$. By
\cite{ill} variant 4.4 of theorem 4.2, we have 
$D \Psi_{\IC, \overline \Zm_l} \simeq \Psi_{\IC, \overline \Zm_l}$, so that
$$\Psi_{\IC, \overline \Zm_l} \in \lexp p \DC^{\leq 0}(X_{\IC,\bar s},\overline \Zm_l) \cap 
\lexp {p+} \DC^{\geq 0}(X_{\IC,\bar s},\overline \Zm_l)=\FC(X_{\IC,\bar s}, \overline \Zm_l).$$

\begin{propo} \label{prop-decomposition} (cf. \cite{boyer-local-ihara} \S 3.2)
We have a decomposition
$$\Psi_\IC \simeq \bigoplus_{g=1}^d \bigoplus_{\varrho \in \scusp_{\overline \Fm_l}(g)} \Psi_{\varrho}$$
where all the Harris-Taylor perverse sheaves of $\Psi_\varrho \otimes_{\overline \Zm_l} \overline \Qm_l$
are of type $\varrho$.
\end{propo}

\rem In \cite{boyer-invent2}, we decomposed $\Psi \otimes_{\overline \Zm_l} \overline \Qm_l$
as a direct sum $\bigoplus_{\pi_v} \Psi_{\pi_v}$ where $\pi_v$ described the set of equivalent
inertial classes of irreducible cuspidal representations. The
$\Psi_\varrho \otimes_{\overline \Zm_l} \overline \Qm_l \simeq 
\bigoplus_{\pi_v \in \cusp(\varrho)} \Psi_{\pi_v}$ where $\cusp(\varrho)$ is the set of equivalent inertial
classes of irreducible cuspidal representation of type $\varrho$ in the sense of definition \ref{defi-rho-type}.

In \cite{boyer-FT}, we give the precise description of the $\gr^r_{\SF,!}(\Psi_{\IC,\varrho})$
which is defined other $\overline \Zm_l$.
By construction they are supported on $X^{\geq r}_{\IC,\bar s_v}$ and trivial if $g$ 
does not divide $r$. Otherwise for $r=tg$, we have
\begin{multline*}
\ind_{(D_{v,tg}^\times)^0 \varpi_v^\Zm}^{D_{v,tg}^\times} \Bigl (  
j^{=tg,*} \gr^{tg}_{\SF,!}(\Psi_{\IC,\varrho}) \otimes_{\overline \Zm_l} \overline \Qm_l \Bigr )
\simeq \\ \bigoplus_{\atop{i=-1}{t_ig_i(\varrho)= tg}}^{r(\varrho)} 
\bigoplus_{\pi_v \in \scusp_i(\varrho)} HT(\pi_v,\st_{t_i}(\pi_v)) \otimes \Lm(\pi_v)(-\frac{t_i-1}{2}).
\end{multline*}
We can then consider the following naive $\varrho$-filtration
$$\Fil^*_{\varrho,r(\varrho),\overline \Qm_l} (\Psi,tg) \subset \cdots \subset 
\Fil^*_{\varrho,-1,\overline \Qm_l} (\Psi,tg) =j^{=tg,*} \gr^{tg}_{\SF,!}(\Psi_{\IC,\varrho}) 
\otimes_{\overline \Zm_l}  \overline \Qm_l$$
where $\ind_{(D_{v,tg}^\times)^0 \varpi_v^\Zm}^{D_{v,tg}^\times} \Bigl (  
\Fil^*_{\varrho,k,\overline \Qm_l} (\Psi,tg) \Bigr )$ is isomorphic to
$$\bigoplus_{\atop{i=k}{t_ig_i(\varrho) = tg}}^{r(\varrho)} 
\bigoplus_{\pi_v \in \scusp_i(\varrho)} HT(\pi_v,\st_{t_i}(\pi_v)) \otimes \Lm(\pi_v)(-\frac{t_i-1}{2}),$$
and the associated entire filtration of $j^{=tg,*} \gr^{tg}_{\SF,!}(\Psi_{\IC,\varrho})$ defined by pullback
$$\xymatrix{
\Fil^*_{\varrho,k} (\Psi,tg) \ar@{^{(}-->}[r] \ar@{^{(}-->}[r] \ar@{^{(}-->}[d] 
& \Fil^*_{\varrho,k,\overline \Qm_l} (\Psi,tg) \ar@{^{(}->}[d] \\
j^{=tg,*} \gr^{tg}_{\SF,!}(\Psi_{\IC,\varrho}) \ar@{^{(}->}[r] & j^{=tg,*} \gr^{tg}_{\SF,!}(\Psi_{\IC,\varrho})
\otimes_{\overline \Zm_l} \overline \Qm_l.
}$$
For $k=-1,\cdots,r(\varrho)$, the graduates $\gr_{\varrho,k} (\Psi,tg)$ are then of $\varrho$-type $k$.
We can then refine these filtrations by separating the $\pi_v \in \scusp_k(\varrho)$ to obtain
$$(0)=\Fil_{\varrho}^{*,0}(\Psi,tg) \subset \Fil_{\varrho}^{*,1}(\Psi,tg)  \subset \cdots \subset 
\Fil_{\varrho}^{*,r}(\Psi,tg)=j^{=tg,*} \gr^{tg}_{\SF,!}(\Psi_{\IC,\varrho}).$$
By taking the iterated images of $j^{=tg}_!  \Fil_{\varrho}^k(\Psi,tg) \longrightarrow 
\gr^{tg}_{\SF,!}(\Psi_{\IC,\varrho})$, we then construct a filtration
$$(0)=\Fil_{\varrho}^{0}(\Psi,tg) \subset \Fil_{\varrho}^{1}(\Psi,tg)  \subset \cdots \subset 
\Fil_{\varrho}^{r}(\Psi,tg)= \gr^{tg}_{\SF,!}(\Psi_{\IC,\varrho}).$$
Finally we can filtrate each of these graduate using an exhaustive filtration of stratification to obtain
a filtration of $\Psi_{\IC,\varrho}$ whose graduates are the $\PF(\pi_v,t)(\frac{1-s_i(\varrho)+2k}{2})$ for 
$\pi_v \in \scusp_i(\varrho)$ with $i \geq -1$, and $k=0,\cdots,s_i(\varrho)-1$.
%
%
%

\section{Cohomology of KHT Shimura varieties}

\subsection{Localization at a non pseudo-Eisenstein ideal}

\begin{defin} \label{defi-spl}
Define $\spl$ the set of  places $v$ of $F$ such that $p_v:=v_{|\Qm} \neq l$ is split in $E$ and
$B_v^\times \simeq GL_d(F_v)$.  For each $I \in \IC$, write
$\spl(I)$ the subset of $\spl$ of places which doesn't divide the level $I$.
\end{defin}

Let $\unr(I)$ be the union of
\begin{itemize}
\item places $q \neq l$ of $\Qm$ inert in $E$ not below a place of $\bad$ and where $I_q$ is maximal,

\item and places $w \in \spl(I)$.
\end{itemize}

\begin{nota} \label{nota-spl2}
For $I \in \IC$ a finite level, write
$$\Tm_I:=\prod_{x \in \unr(I)} \Tm_{x}$$
where for $x$ a place of $\Qm$ (resp. $x \in \spl(I)$), 
$\Tm_{x}$ is the unramified Hecke algebra of $G(\Qm_x)$ (resp. of $GL_d(F_x)$) over
$\overline \Zm_l$.
\end{nota}

\noindent \textit{Example}:  for $w \in \spl(I)$, we have
$$\Tm_{w}=\overline \Zm_l \bigl [T_{w,i}:~ i=1,\cdots,d \bigr ],$$
where $T_{w,i}$ is the characteristic function of
$$GL_d(\OC_w) \diag(\overbrace{\varpi_w,\cdots,\varpi_w}^{i}, \overbrace{1,\cdots,1}^{d-i} ) 
GL_d(\OC_w) \subset  GL_d(F_w).$$
More generally, the Satake isomorphism identifies $\Tm_x$ with $\overline \Zm_l[X^{un}(T_x)]^{W_x}$
where
\begin{itemize}
\item $T_x$ is a split torus,

\item $W_x$ is the spherical Weyl group

\item and $X^{un}(T_x)$ is the set of $\overline \Zm_l$-unramified characters of $T_x$.
\end{itemize}

Consider a fixed maximal ideal $\mathfrak m$ of $\Tm_I$ and for every $x \in \unr(I)$ let denote
$S_{\mathfrak m}(x)$ be the multi-set\footnote{A multi-set is a set with multiplicities.} 
of modulo $l$ Satake parameters at $x$ associated to $\mathfrak m$.

\noindent \textit{Example}: 
for every $w \in \spl(I)$, the multi-set of Satake parameters at $w$ corresponds to the roots of
the Hecke polynomial
$$P_{\mathfrak{m},w}(X):=\sum_{i=0}^d(-1)^i q_w^{\frac{i(i-1)}{2}} \overline{T_{w,i}} X^{d-i} \in \overline 
\Fm_l[X]$$
i.e.
$S_{\mathfrak{m}}(w) := \bigl \{ \lambda \in \Tm_I/\mathfrak m \simeq \overline \Fm_l \hbox{ such that }
P_{\mathfrak{m},w}(\lambda)=0 \bigr \}.$
For a maximal ideal $\widetilde{\mathfrak m}$ of $\Tm_{I^l} \otimes_{\overline \Zm_l} \overline \Qm_l$, we also
have the multi-set of Satake parameters 
$$S_{\widetilde{\mathfrak{m}}}(w) := \bigl \{ \lambda \in 
\Tm_I \otimes_{\overline \Zm_l} \overline \Qm_l/ \widetilde{\mathfrak m} \simeq \overline \Qm_l \hbox{ such that }
P_{\widetilde{\mathfrak{m},w}}(\lambda)=0 \bigr \}.$$

\begin{nota}
Let $\Pi$ be an irreducible automorphic representation of $G(\Am)$ of level $I$ which means here, 
that $\Pi$ has non trivial invariants under $I$ and
 for every $x \in \unr(I)$, then $\Pi_x$ is unramified. Then $\Pi$ defines 
 \begin{itemize}
 \item a maximal ideal 
$\widetilde{\mathfrak m}(\Pi)$ of $\Tm_{I^l} \otimes_{\overline \Zm_l} \overline \Qm_l$, or

\item a minimal prime ideal $\widetilde{\mathfrak m}(\Pi)$ of $\Tm_{I^l}$ contained in a maximal ideal
$\mathfrak m(\Pi)$ of $\Tm_{I^l}$ which corresponds to its the modulo $l$ Satake parameters.
\end{itemize}
\end{nota}

A minimal prime ideal $\widetilde{\mathfrak m}$ of $\Tm_{I^l}$ is said cohomological if there exists
a cohomological automorphic $\overline \Qm_l$-representation $\Pi$ of $G(\Am)$ of level $I$
with $\widetilde{\mathfrak m}=\widetilde{\mathfrak m}(\Pi)$. Such a $\Pi$ is not unique but
$\widetilde{\mathfrak m}$ defines a unique near equivalence class in the sense of \cite{y-t}, we denote it
$\Pi_{\widetilde{\mathfrak m}}$. Let then
$$\rho_{\widetilde{\mathfrak m},\overline \Qm_l}:\gal(\bar F/F) \longrightarrow GL_d(\overline \Qm_l)$$ 
be the galoisian representation associated to such a $\Pi$ thanks to \cite{h-t} and \cite{y-t}, 
which by Cebotarev theorem, can be defined over some finite extension $K_{\widetilde{\mathfrak m}}$, i.e.
$\rho_{\widetilde{\mathfrak m},\overline \Qm_l} \simeq \rho_{\widetilde{\mathfrak m}} 
\otimes_{K_{\widetilde{\mathfrak m}}} \overline \Qm_l$.

It's well known that  $\rho_{\widetilde{\mathfrak m}}$ has stable lattices and the semi-simplication
of its modulo $l$ reduction is independent of the chosen stable lattice. Moreover it depends only
of the maximal ideal $\mathfrak m$, we denote
$$\overline \rho_{\mathfrak m}: G_F \longrightarrow GL_d(\overline \Fm_l),$$
its extension to $\overline \Fm_l$. 
For every $w \in \spl(I)$, recall that the multiset of eigenvalues of $\overline \rho_{\mathfrak m}(\frob_w)$
is $S_{\mathfrak m}(w)$ obtained from $S_{\widetilde{\mathfrak m}}(w)$ by taking modulo $l$ reduction.

Assume moreover that $\overline \rho_{\mathfrak m}$ is absolutely 
irreducible. Then the $\overline \Qm_l$-cohomology group 
$H^{d-1}(X_{U,\bar \eta},\overline \Qm_l)_{\mathfrak m}$ gives a 
continuous $d$-dimensional Galois representation
$$\rho_{\mathfrak m}:\gal_{F,S} \longrightarrow GL_d(\Tm_{S,\mathfrak m}[1/l]),$$
where $\gal_{F,S}$ is the Galois group of the maximal extension of $F$ which
is unramified outside $S$. As all characteristic polynomials of Frobenius takes values
in $\Tm_{S,\mathfrak m}$ and as $\overline \rho_{\mathfrak m}$ is absolutely irreducible,
using classical theory of pseudo-representations, we know that $\rho_{\mathfrak m}$
takes values in $GL_d(\Tm_{S,\mathfrak m})$.

\subsection{Freeness of the cohomology}

From now on we fix such a maximal ideal $\mathfrak m$ of $\Tm_I$ verifying one of 
the following three conditions:
\begin{itemize}
\item[(1)] There exists $w_1 \in \spl(I)$ such that $S_{\mathfrak{m}}(w_1)$ does not contain any 
sub-multi-set of the shape $\{ \alpha, q_{w_1} \alpha \}$ where $q_{w_1}$ is the cardinal
of the residue field. Note also that this hypothesis is closed but weaker than
the notion of \emph{decomposed genericness} introduced in \cite{scholze-cara}.

\item[(2)] With $l \geq d+2$ we ask $\mathfrak m$ to verify one of the two following hypothesis
\begin{itemize}
\item either $\overline \rho_{\mathfrak m}$ is induced form a character of $G_K$ for
a cyclic galoisian extension $K/F$;

\item either $SL_n(k) \subset \overline \rho_{\mathfrak m}(G_F) \subset \overline \Fm_l^\times
GL_n(k)$ for a sub-field $k \subset \overline \Fm_l$.
\end{itemize}

\item[(3)] $\overline \rho_{\mathfrak m}$ is absolutely irreducible and $l \geq d+1$
except maybe for a finite number of them depending only of the extension $F/\Qm$, cf.
the main theorem of \cite{boyer-bordeaux}.
\end{itemize}

\begin{theo} \label{theo2} (cf. \cite{boyer-imj})
For $\mathfrak m$ as above, the localized cohomology groups 
$H^i(X_{I,\bar \eta},\overline \Zm_l)_{\mathfrak m}$ are free. 
\end{theo}

As $X_I \longrightarrow \spec\OC_v$ is proper, we have a $G(\Am^\oo) \times W_v$-equivariant
isomorphism
$H^{d-1+i}(X_{\IC,\bar \eta_v},\overline \Zm_l) \simeq H^i(X_{I,\bar s_v},\Psi_{\IC}).$
Using the previous filtration of $\Psi_{\IC}$, we can compute 
$H^{p+q}(X_{I,\bar s_v},\Psi_{\IC,\varrho})_{\mathfrak m}$
through a spectral sequence whose entries $E_1^{p,q}$ are the 
$H^j(X_{\IC,\bar s_v},\PF(\pi_v,t)(\frac{1-s_i(\varrho)+2k}{2}))_{\mathfrak m}$
for $\pi_v \in \scusp_i(\varrho)$ with $i \geq -1$, and $k=0,\cdots,s_i(\varrho)-1$.
Over $\overline \Qm_l$, it follows from \cite{boyer-compositio}, thanks to the hypothesis (1) above on 
$\mathfrak m$, that all these cohomology groups are concentrated in degree $0$ so that this 
$\overline \Qm_l$-spectral sequence degenerates in $E_1$. In this section, under (H2), 
we want to prove the same result on $\overline \Fm_l$ which is equivalent to the freeness of the 
$H^j(X_{\IC,\bar s_v},\PF(\pi_v,t)(\frac{1-s_i(\varrho)+2k}{2}))_{\mathfrak m}$.

We need first some notations from \cite{boyer-compositio} \S 1.2. For all $t \geq 0$, we denote
$$\Gamma^t:=\Bigl \{ (a_1,\cdots,a_r,\epsilon_1,\cdots,\epsilon_r) \in \Nm^r \times \{ \pm \}^r:~
r \geq 1,~\sum_{i=1}^r a_i=t \Bigr \}.$$
A element of $\Gamma^t$ will be denoted $(\overleftarrow{a_1}, \cdots , \overrightarrow{a_r})$ where
for the arrow above each integer $a_i$ is $\overleftarrow{a_i}$ (resp.
$\overrightarrow{a_i}$) if $\epsilon_i=-$ (resp. $\epsilon_i=+$). We then consider on $\Gamma^t$ 
the equivalence relation induced by  
$$(\cdots,\overleftarrow{a},\overleftarrow{b},\cdots)=(\cdots,\overleftarrow{a+b},\cdots), \quad
(\cdots,\overrightarrow{a},\overrightarrow{b},\cdots)=(\cdots,\overrightarrow{a+b},\cdots),$$ 
and $(\cdots, \overleftarrow{0},\cdots)=(\cdots,\overrightarrow{0},\cdots)$. 
We denote $\overrightarrow{\Gamma}^t$ the set of these equivalence classes whose elements are
denoted $[\overleftarrow{a_1},\cdots,\overrightarrow{a_r}]$.

\rem In each class $[\overleftarrow{a_1},\cdots,\overrightarrow{a_k}] 
\in \overrightarrow{\Gamma}^t$, there exists a unique reduced element 
$(b_1,\cdots,b_r,\epsilon_1,\cdots,\epsilon_r) \in \Gamma^t$ such that  for all $1 \leq i \leq r$
$b_i>0$ and for $1 \leq i < r$, $\epsilon_i \epsilon_{i+1}=-$.

\begin{defi} \label{defi-graphe}
Let $(b_1,\cdots,b_r,\epsilon_1,\cdots,\epsilon_r)$ be the reduced element in 
$[\overleftarrow{a_1},\cdots,\overrightarrow{a_k}] \in \overrightarrow{\Gamma}^t$. We then define 
$$\SC\Bigl ( [\overleftarrow{a_1},\cdots,\overrightarrow{a_k}] \Bigr )$$
as the subset of permutations $\sigma$ of $\{ 0,\cdots,t-1 \}$ such that for all 
$1 \leq i \leq r$ with $\epsilon_i=+$ (resp. $\epsilon_i=-$) and for all
 $b_1+\cdots + b_{i-1} \leq k < k' \leq b_1+ \cdots +b_i$ then $\sigma^{-1}(k) < \sigma^{-1}(k')$ 
(resp. $\sigma^{-1}(k) > \sigma^{-1}(k')$). 

We also introduce $\SC^{op}\Bigl ( [\overleftarrow{a_1},\cdots,\overrightarrow{a_k}] \Bigr )$, by imposing
under the same conditions,  $\sigma^{-1}(k)> \sigma^{-1}(k')$ (resp. $\sigma^{-1}(k) < \sigma^{-1}(k')$).
\end{defi}

\begin{prop} \label{prop-defi-gamma-jacquet} (cf. \cite{zelevinski2} \S 2)
Let $g$ be a divisor of $d=sg$ and $\pi$ an irreducible cuspidal representation of $GL_g(F_v)$. 
There exists a bijection
$$[\overleftarrow{a_1}, \cdots , \overrightarrow{a_r}] \in \overrightarrow{\Gamma}^{s-1}
\mapsto [\overleftarrow{a_1}, \cdots , \overrightarrow{a_r}]_{\pi}$$ 
into the set of irreducible sub-quotients of the induced representation
$$\pi \{ \frac{1-s}{2} \} \times \pi \{ \frac{3-s}{2} \} \times \cdots \times \pi \{ \frac{s-1}{2} \}$$
characterized by the following property
$$J_{P_{g,2g,\cdots,sg}}([\overleftarrow{a_1}, \cdots , \overrightarrow{a_r}]_{\pi})=\sum_{\sigma \in
\SC \Bigl ( [\overleftarrow{a_1}, \cdots , \overrightarrow{a_r}] \Bigr ) } \pi \{ \frac{1-s}{2}+\sigma(0) \}
\otimes \cdots \otimes \pi\{ \frac{1-s}{2}+\sigma(s-1) \},$$
or equivalently by
$$J_{P^{op}_{g,2g,\cdots,sg}}([\overleftarrow{a_1}, \cdots , \overrightarrow{a_r}]_{\pi})=\sum_{\sigma \in
\SC^{op} \Bigl ( [\overleftarrow{a_1}, \cdots , \overrightarrow{a_r}] \Bigr ) } \pi \{ \frac{1-s}{2}+\sigma(0) \}
\otimes \cdots \otimes \pi\{ \frac{1-s}{2}+\sigma(s-1) \}.$$
\end{prop}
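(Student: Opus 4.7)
The statement is Zelevinsky's classification of the irreducible subquotients of the segment product $\pi\{(1-s)/2\}\times\cdots\times\pi\{(s-1)/2\}$, characterized via Jacquet modules along the standard (and opposite) parabolics $P_{g,2g,\ldots,sg}$. The plan is, first, to match cardinalities, then to compute the Jacquet module of the whole induced representation, and finally to extract each irreducible subquotient by induction.

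A direct count shows $|\overrightarrow{\Gamma}^{s-1}|=2^{s-1}$: the reduced representatives are alternating-sign compositions of $s-1$ into $r\geq 1$ positive parts, giving $2\sum_{r=1}^{s-1}\binom{s-2}{r-1}=2^{s-1}$ (with the convention that the single class $[\overleftarrow{0}]=[\overrightarrow{0}]$ handles the case $s=1$), in agreement with Zelevinsky's count of irreducible subquotients of the segment product. Next, by the Bernstein-Zelevinsky geometric lemma together with the cuspidality of $\pi$, the Jacquet module of the full induced representation along $P_{g,2g,\ldots,sg}$ is $\sum_{\sigma\in S_s}\pi\{(1-s)/2+\sigma(0)\}\otimes\cdots\otimes\pi\{(1-s)/2+\sigma(s-1)\}$ with each summand of multiplicity one, the factors $\pi\{j\}$ being pairwise non-isomorphic; the opposite parabolic yields the analogous sum.

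The bijection is then defined by sending $[\overleftarrow{a_1},\ldots,\overrightarrow{a_r}]$ to the unique irreducible subquotient whose Jacquet module contains the sub-sum indexed by $\SC([\overleftarrow{a_1},\ldots,\overrightarrow{a_r}])$. Existence and uniqueness are proved by induction on $s$: the extremal representatives $\overleftarrow{s-1}$ and $\overrightarrow{s-1}$ correspond respectively to $\st_s(\pi)$ and $\speh_s(\pi)$, whose Jacquet modules along $P_{g,2g,\ldots,sg}$ are known to reduce to a single permutation term (the decreasing resp. increasing shuffle), matching the singleton $\SC$-sets obtained from Definition~\ref{defi-graphe}. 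For general $r$, one peels off irreducible subquotients using the intertwining operators among the $s!$ ordered inductions of the cuspidal segment; the inductive hypothesis applied to the Jacquet modules of these sub-inductions along suitable refined parabolics identifies the subquotients with the classes having prescribed block structure.

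The main obstacle is the combinatorial claim that the family $\bigl\{\SC([\overleftarrow{a_1},\ldots,\overrightarrow{a_r}])\bigr\}_{[\overleftarrow{a_1},\ldots,\overrightarrow{a_r}]\in \overrightarrow{\Gamma}^{s-1}}$ forms a partition of $S_s$, and that the characterizations via $\SC$ and $\SC^{op}$ for the two opposite parabolics pick out the same subquotient; the latter follows by applying the Zelevinsky involution (or contragredient) to interchange $\st$ and $\speh$ type contributions and using self-duality of the segment product, while the former is a standard exercise on descent compositions of permutations with sign decorations at each run. Together with the multiplicity-one property of the full Jacquet module, this ensures that each of the $2^{s-1}$ $\SC$-indexed sub-sums is realized by a unique irreducible subquotient and exhausts them all, giving the claimed bijection.
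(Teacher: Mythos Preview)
The paper does not give its own proof of this proposition: it is stated with the attribution ``(cf.\ \cite{zelevinski2} \S 2)'' and no argument follows. There is therefore no paper proof to compare against; the result is simply quoted from Zelevinsky.

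Your sketch is a reasonable outline of the standard argument. The cardinality count and the geometric-lemma computation of the full Jacquet module are correct, and the claim that the sets $\SC([\overleftarrow{a_1},\ldots,\overrightarrow{a_r}])$ partition $S_s$ is also correct: as you suggest, it amounts to the observation that a permutation is determined up to this equivalence by the ascent/descent pattern of $\sigma^{-1}$ at consecutive positions, and reduced representatives of $\overrightarrow{\Gamma}^{s-1}$ are exactly the possible such patterns.

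The one place where your write-up is genuinely thin is the inductive step: ``one peels off irreducible subquotients using the intertwining operators'' and ``the inductive hypothesis applied to the Jacquet modules of these sub-inductions along suitable refined parabolics identifies the subquotients'' are gestures rather than arguments. What is actually needed is to show that each irreducible subquotient has its Jacquet module equal to a \emph{union} of $\SC$-blocks, and then that it is in fact a single block; the partition property plus multiplicity one gives the bijection once you know the first of these. Zelevinsky does this by an explicit construction of the subquotients (via the Langlands/Zelevinsky classification of the segment product) together with transitivity of the Jacquet functor, not by a bare induction on $s$. If you want a self-contained proof rather than a pointer to \cite{zelevinski2}, that step needs to be made precise.
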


\rem With these notations $\st_s(\pi)$ (resp. $\speh_s(\pi)$) is $[\overleftarrow{s-1}]_\pi$ 
(resp. $[\overrightarrow{s-1}]_\pi$).

\begin{lemm} \label{lem-redmodl}
Let $\pi$ be an irreducible cuspidal $\overline \Qm_l$-representation 
of $GL_g(F_v)$ such that its modulo $l$ reduction $\varrho$ is supercuspidal.
Suppose the cardinal of the Zelevinsky line of $\varrho$ is greater or equal to $s$.
Then the irreducible sub-quotients of the modulo $l$ reduction of   
$[\overleftarrow{a_1}, \cdots , \overrightarrow{a_r}]_{\pi}$ for
$[\overleftarrow{a_1}, \cdots , \overrightarrow{a_r}]$ describing $\overrightarrow{\Gamma}^{s-1}$,
are pairwise distinct.
%
\end{lemm}

\begin{proof}
By hypothesis on the cardinal of the Zelevinsky line,
all these irreducible $\overline \Fm_l$-sub-quotient have a non trivial image under
$J_{P_{g,2g,\cdots,sg}}$. The result then follows directly
\begin{itemize}
\item from the commutation of Jacquet functors with the modulo $l$ reduction and 

\item from the fact that the $r_l(\pi) \{ \frac{1-s}{2}+k \}$ are pairwise distinct for $0 \leq k < s$
so that the image under $J_{P_{g,2g,\cdots,sg}}$ of
$\pi \{ \frac{1-s}{2} \} \times \pi \{ \frac{3-s}{2} \} \times \cdots \times \pi \{ \frac{s-1}{2} \}$
is multiplicity free.
\end{itemize}
\end{proof}

\begin{nota} \label{nota-segment-modl}
We will denote $[\overleftarrow{a_1}, \cdots , \overrightarrow{a_r}]_{\varrho}$ any irreducible sub-quotient
of the modulo $l$ reduction of $[\overleftarrow{a_1}, \cdots , \overrightarrow{a_r}]_{\pi}$.
\end{nota}

\rem If moreover the cardinal of the Zelevinsky line of $\varrho$ is strictly greater than $s$, then, cf.
\cite{boyer-repmodl}, the modulo $l$ reduction of $[\overleftarrow{s-1}]_\pi$ is irreducible and non
degenerate, i.e. $[\overleftarrow{s-1}]_\varrho$ is well determinate and non degenerate.

\begin{theo} \label{theo-torsion}
Consider a maximal ideal $\mathfrak m$ of $\Tm_S$ such that for all $i$, the
$\overline \Zm_l$-module $H^i(X_{U,\bar \eta},\overline \Zm_l)_{\mathfrak m}$ is free.
We suppose moreover (H2) that the image of $\bar \rho_{\mathfrak m,w_0}$ in the
Grothendieck group is multiplicity free. Then for all $\overline \Zm_l$-Harris-Taylor local system 
$HT(\pi_{w_0},t)$, the $H^i(X_{U,\bar s_{w_0}},\lexp pj^{=tg}_{!*} HT(\pi_{w_0},t))_{\mathfrak m}$ 
are free.
\end{theo}

\rem In the previous statement, we just need the multiplicity free part of (H2) as it is used in 
the previous lemma. 
Note moreover that, by \cite{boyer-aif} \S 4.5, the multiplicity free hypothesis is necessary.

\begin{proof}
First denote $\scusp_{w_0}(\mathfrak m)$ the set of inertial equivalence classes of 
irreducible supercuspidal
$\overline \Fm_l$-representations belonging to the supercuspidal support of the modulo $l$
reduction of the local component at $w_0$ of a representation $\Pi$ in the near equivalence
class $\Pi_{\mathfrak m}$ associated to $\mathfrak m$. 

We then consider the vanishing cycle spectral sequence at $w_0$, localized at $\mathfrak m$
$$H^i(X_{U,\bar \eta_{w_0}},\overline \Zm_l)_{\mathfrak m} \simeq 
\bigoplus_{\varrho \in \scusp_{w_0}(\mathfrak m)}  H^i(X_{U,\bar s_{w_0}},
\Psi_{\IC,\varrho})_{\mathfrak m}.$$
Then for every $\varrho \in \scusp_{w_0}(\mathfrak m)$, the $H^i(X_{U,\bar s_{w_0}},
\Psi_{\IC,\varrho})_{\mathfrak m}$ are free. For $\pi_v$ of type $\varrho$, the strategy to
prove the freeness of the  $H^i(X_{U,\bar s_{w_0}},\lexp pj^{=tg}_{!*} HT(\pi_{w_0},t))_{\mathfrak m}$
is then to argue by absurdity and to produce some torsion cohomology class in one of the
 $H^i(X_{U,\bar s_{w_0}}, \Psi_{\IC,\varrho})_{\mathfrak m}$.
Let then $t$ be minimal such that there exists $i \neq 0$ with 
$$H^{i}(X_{\IC,\bar s_{w_0}},\PC(\pi_{w_0},t)(\frac{1-t+2k}{2}))_{\mathfrak m} 
\otimes_{\overline \Zm_l} \overline \Fm_l \neq (0)$$
for $0 \leq k < t$, and where $\PC(\pi_{w_0},t)(\frac{1-t+2k}{2})$ is a graduate of some filtration
of $\Psi_\varrho$.

Consider for example the filtration constructed before using the adjunction 
$j^{1 \leq h}_! j^{1 \leq h,*} \longrightarrow \Id$. As remarked before, even at taking $\Psi_{\varrho^\vee}$
and its filtration constructed using $\Id \longrightarrow j^{1 \leq h}_* j^{1 \leq h,*}$, we can suppose that such
$i$ is strictly negative and we denote $i_0$ such a minimal $i$.

By lemma \ref{lem-coho-flou}, an irreducible $GL_d(F_{w_0}) \times W_{w_0}$-equivariant sub-quotient
of $H^{i}(X_{\IC,\bar s_{w_0}},\PC(\pi_{w_0},t)(\frac{1-t+2k}{2}))_{\mathfrak m} 
\otimes_{\overline \Zm_l} \overline \Fm_l$ is one of the modulo $l$ reduction of a representation
we can write in the following shape
$$\Bigl ( [\overleftarrow{a_1}, \cdots , \overrightarrow{a_{i-1}}, \overleftrightarrow{1},
\overleftarrow{t-1}, \overleftrightarrow{1}, \overrightarrow{a_{i+1}},
\cdots \overleftarrow{a_r}]_{\pi} 
\times \Upsilon_{w_0} \Bigr ) \otimes \Lm(\pi \{ \frac{\delta}{2} \} )$$
where the $a_j$ are some integers, $\Upsilon_{w_0}$ is an irreducible entire $\overline \Qm_l$-representation 
whose modulo $l$ reduction have a supercuspidal support away from those of the previous segment and where
\begin{itemize}
\item the symbol $\overleftrightarrow{1}$ before (resp. after) the $\overleftarrow{t-1}$ can be
$\overleftarrow{1}$ or $\overrightarrow{1}$ if $\sum_{j=1}^{i-1} a_i>0$ (resp.
$\sum_{j=i+1}^r a_i>0$). We will denote moreover $a_i=t+1$.


\item Let denote $\Bigl \{ \pi \{ \frac{\alpha}{2} \}, \pi \{ \frac{\alpha}{2} +1 \}, \cdots,
\pi \{ \frac{\alpha}{2}+t-1 \} \Bigr \}$ the supercuspidal support of 
$\overleftarrow{t-1}$ inside $[\overleftarrow{a_1}, \cdots , \overleftarrow{t-1},\cdots \overrightarrow{a_r}]_{\pi}$.
The $\frac{\delta}{2} =\frac{\alpha}{2}+k$ where $k$ is the integer in $\PC(\pi_{w_0},t)(\frac{1-t+2k}{2})$.
\end{itemize}

\rem In particular we can suppose that the previous $k$ is equal to $0$.

Consider then such an irreducible sub-quotient $\tau \times \psi_{w_0} \otimes  \sigma$ where
\begin{itemize}
\item $\psi_{w_0}$ (resp. $\sigma$) is any irreducible sub-quotient of the modulo $l$ reduction of  $\Upsilon_{w_0}$
(resp. $\Lm(\pi \{ \frac{\delta}{2} \} )$), and

\item $\tau$ is an irreducible sub-quotient of the modulo $l$ reduction of some
$$[\overleftarrow{a_1}, \cdots , \overrightarrow{a_{i-1}}, \overrightarrow{1},
\overleftarrow{t-1}, \overrightarrow{1}, \overrightarrow{a_{i+1}},
\cdots \overrightarrow{a_r}]_{\pi}.$$
By the previous lemma, we can recover the $a_i$ from $\tau$.
\end{itemize}
Let show now that this $\tau \times \psi_{w_0} \otimes  \sigma$ is also a sub-quotient of
$H^{i_0}(X_{\IC,\bar s_{w_0}},\Psi_{\IC,\varrho})_{\mathfrak m} \otimes_{\overline \Zm_l} \overline \Fm_l$, 
which contradicts our hypothesis on $\mathfrak m$.
Let denore $\Fil^{k-1} \subset \Fil^{k} \subset \Psi_{\IC,\varrho}$ such that
$\gr^k=\Fil^k/\Fil^{k-1} \simeq \PC(\pi_{w_0},t)(\frac{1-t}{2})$.
By the hypothesis (H2), all the irreducible cuspidal $\overline \Qm_l$-representations 
$\pi'_{w_0} \in \scusp_{\overline \Fm_l}(\varrho)$
such that one of the $H^{i}(X_{\IC,\bar s_{w_0}},\PC(\pi'_{w_0},t)(\frac{1-t+2k}{2}))_{\mathfrak m} \neq (0)$
is necessarily of $\varrho$-type $-1$. Then in particular all the Harris-Taylor perverse sheaves $\PC(\pi'_{w_0},t')$
which are sub-quotient of $\Fil^{k-1}$, must verify $t'>t$. The spectral sequence which computes
$H^{i_0+1}(X_{\IC,\bar s_{w_0}},\Fil^{k-1})_{\mathfrak m} \otimes_{\overline \Zm_l} \overline \Fm_l$ 
thanks to a filtration of $\Fil^{k-1}$, allows to describe it as extensions between irreducible sub-quotients
of the modulo $l$ reduction of some
$$\Bigl ( [\overleftarrow{a_1}, \cdots , \overrightarrow{a_{i-1}}, \overleftrightarrow{1},
\overleftarrow{t'-1}, \overleftrightarrow{1}, \overrightarrow{a_{i+1}},
\cdots \overrightarrow{a_r}]_{\pi'} 
\times \psi_{w_0} \Bigr ) \otimes \Lm(\pi' \{ \frac{\delta'}{2} \} )$$
with $t'>t$ and where $\pi' \{\frac{\delta'}{2} \}$ belongs to the supercuspidal support of 
$\overleftarrow{t'-1}$ in the previous writing. But using the inequality $t'>t$, we see that
$\tau$ can't be a sub-quotient of the modulo $l$ reduction of any
$[\overleftarrow{a_1}, \cdots , \overrightarrow{a_{i-1}}, \overrightarrow{1},
\overleftarrow{t-1}, \overrightarrow{1}, \overrightarrow{a_{i+1}},
\cdots \overrightarrow{a_r}]_{\pi}$.

Now using the filtration $\Fil^{k-1} \subset \Fil^k \subset \Psi_\varrho$, to conclude it suffices to
look at $H^{i_0-1}(X_{\IC,\bar s_{w_0}},\Psi_{\IC,\varrho}/\Fil^k)_{\mathfrak m}
\otimes_{\overline \Zm_l} \overline \Fm_l$. For the Harris-Taylor perverse sheaves
$\PC(\pi'_{w_0},t)(\frac{1-t+2k}{2})$ with $t'>t$ we argue as before, and for the others we invoke
the minimality of $t$ and $i_0$.
\end{proof}

\subsection{From Ihara's lemma to the cohomology}

Recall first that 
$$X_{\IC,\bar s_{v_0}}^{=d}=\coprod_{i \in  \ker^1(\Qm,G)} X_{\IC,\bar s_{v_0},i}^{=d},$$
and that for a $G(\Am^\oo)$-equivariant sheaf $\FC_{\IC,i}$ on $X_{\IC,\bar s_{v_0},i}^{=d}$,
its fiber at some compatible system $z_{i,\IC}$ of supersingular points, has an action of
$\overline G(\Qm) \times GL_d(F_{v_0})^0$ where $GL_d(F_{v_0})^0$ 
is the kernel of the valuation of the determinant so that, cf. \cite{boyer-invent2} proposition 5.1.1, 
as a $G(\Am^\oo)\simeq \overline G(\Am^{\oo,v_0}) \times GL_d(F_{v_0})$-module, we have
$$H^0(\bar X_{\IC,\bar s_{v_0},i}^{=d},\FC_{\IC,i}) \simeq 
\ind_{\overline G(\Qm)}^{\overline G(\Am^{\oo,v_0}) \times \Zm} z_i^* \FC_{\IC,i}$$
with $\delta \in \overline G(\Qm) \mapsto (\delta^{\oo,v_0},\val \circ \rn( \delta_{v_0})) 
\in \overline G(\Am^{\oo,v_0}) \times \Zm$ and where the action of
$g_{v_0} \in GL_d(F_{v_0})$ is given by those of 
$(g_0^{-\val \det g_{v_0}} g_{v_0},\val \det g_{v_0}) \in GL_d(F_{v_0})^0 \times \Zm$
where $g_0 \in GL_d(F_{v_0})$ is any fixed element with $\val \det g_0=1$. 
Moreover, cf. \cite{boyer-invent2} corollaire 5.1.2, if $z_i^* \FC_\IC$ is provided with an action
of the kernel $(D_{v_0,d}^\times)^0$ of the valuation of the reduced norm, action compatible with those
of $\overline G(\Qm) \hookrightarrow D_{v_0,d}^\times$, then as a $G(\Am^\oo)$-module, we have
\addtocounter{smfthm}{1}
\begin{equation} \label{h0-ss}
H^0(X_{\IC,\bar s_{v_0},i}^{=d},\FC_{\IC,i}) \simeq \CC^\oo(\overline G(\Qm) \backslash 
\overline G(\Am^{\oo}),\Lambda)
\otimes_{D_{v_0,d}^\times}  \ind_{(D_{v_0,d}^\times)^0}^{D_{v_0,d}^\times} z_i^* \FC_{\IC,i}
\end{equation}

\begin{lemm} 
Le $\overline \pi$ be an irreducible sub-$\overline \Fm_l$-representation of
$\mathcal C^\oo(\overline G(\Qm) \backslash \overline G(\Am)/U^{v_0},\overline \Fm_l)$.
Denote its local component $\bar \pi_{v_0}$ at $v_0$ as $\pi_{v_0}[s]_D$ with $\pi_{v_0}$
an irreducible cuspidal representation of $GL_g(F_{v_0})$ with $d=sg$. Then
$\overline \pi^{v_0}$ is a sub-representation of 
$H^0(X^{=d}_{U^{v_0},\bar s_{v_0}},HT(\pi_{v_0}^\vee,s)) \otimes_{\overline \Zm_l} \overline \Fm_l$.
\end{lemm}

\begin{proof}
Clearly we have $\overline \pi^{v_0} \subset \mathcal C^\oo(\overline G(\Qm) \backslash 
\overline G(\Am)/U^{v_0},\overline \Fm_l) \otimes \bar \pi_{v_0}^\vee$. The result then follows
from (\ref{h0-ss}) and the definition of the Harris-Taylor local system $HT(\pi_{v_0}^\vee,s)$ 
with support on the supersingular stratum.
\end{proof}

\begin{prop} \label{prop-coho}
Let  $\mathfrak m$ be a maximal ideal of $\Tm_S$ verifying (H1) and (H3),
and let $\bar \pi$ be an irreducible sub-$\overline \Fm_l$-representation of
$\mathcal C^\oo(\overline G(\Qm) \backslash \overline G(\Am)/U^v,\overline \Fm_l)_{\mathfrak m}$.
Then $\bar \pi^{\oo,v}$ is a sub-$\overline \Fm_l$-representation of 
$H^{d-1}(X_{U,\bar \eta_{v_0}},\overline \Fm_l)_{\mathfrak m}$.
\end{prop}

\begin{proof}
By \cite{dat-jl} theorem 3.1.4, $\bar \pi_{v_0}$ is associated, through modulo $l$ Jacquet-Langlands
correspondence, to some superSpeh, $\speh_s(\varrho)$ with $\varrho$ an
irreducible supercuspidal representation of $GL_g(F_{v_0})$ with $d=sg$.
Recall that
$H^i(X_{U,\bar s_{v_0}},\Psi_{\varrho})_{\mathfrak m}$ is a direct factor of 
$H^{d-1}(X_{U,\bar \eta_{v_0}},\overline \Fm_l)_{\mathfrak m}$, so that it suffices to prove that
$\bar \pi^{\oo,v}$ is a sub-$\overline \Fm_l$-representation of 
$H^i(X_{U,\bar s_{v_0}},\Psi_{\varrho})_{\mathfrak m}$.

As in the proof of theorem \ref{theo-torsion} using now (H3),
consider then the filtration of $\Psi_\varrho$ introduced before so that its graduates are some
Harris-Taylor perverse sheaves of type $\varrho$ and whose $\mathfrak m$-localized
cohomology groups are free concentrated
in degree $0$. Note in particular that $\PC(\pi^\vee_{v_0},s)(\frac{s-1}{2})$
is its first graduate so that, using the spectral sequence computing 
$H^i(X_{U,\bar s_{v_0}},\Psi_{\rho})_{\mathfrak m}$ with $E_1^{p,q}$ given by 
the $H^i(X_{U,\bar s_{v_0}},\PC(\pi'_{v_0},t)(\frac{1-t+2k}{2}))_{\mathfrak m}$, we see that
$$H^i(X_{U,\bar s_{v_0}},\PC(\pi^\vee_{v_0},s)(\frac{s-1}{2}))_{\mathfrak m} \harrow 
H^i(X_{U,\bar s_{v_0}},\Psi_{\rho})_{\mathfrak m}$$
with free cokernel, so that 
$H^0(X^{=d}_{U^{v_0},\bar s_{v_0}},\PC(\pi^\vee_{v_0},s)(\frac{s-1}{2}))_{\mathfrak m} $
is a subspace of 
$H^{d-1}(X_{U,\bar \eta_{v_0}},\overline \Fm_l)_{\mathfrak m}$. The result then follows from
the previous lemma.
\end{proof}

The strategy now to prove Ihara's lemma, under our restrictive hypothesis on $\mathfrak m$,
is then to prove the same statement on $H^{d-1}(X_{U,\bar \eta_{v_0}},\overline \Fm_l)_{\mathfrak m}$,
i.e. if $\pi^{\oo,v_0}$ is a subspace of it, then its local component $\pi^{\oo,v_0}_{w_0}$ at the place
$w_0$, is generic. Finally our Ihara's lemma statement will follows from proposition \ref{prop-principale}.

\section{Non degeneracy property for global cohomology}

\subsection{Global lattices are tensorial product}
\label{para-reseau}

From now on we suppose that $\overline \rho_{\mathfrak m}$ is absolutely 
irreducible. 

\begin{theo} (cf. \cite{scholze-LT} theorem 5.6) \\
As a $\Tm_{S,\mathfrak m}[\gal_{F,S}]$-module,
$$H^{d-1}(X_{U,\bar \eta},\Zm_l)_{\mathfrak m} \simeq \sigma_{\mathfrak m} 
\otimes_{\Tm_{S,\mathfrak m}} \rho_{\mathfrak m},$$
for some $\Tm_{S,\mathfrak m}$-module $\sigma_{\mathfrak m}$ on wich $\gal_F$
acts trivially.
\end{theo}

\rem 
The proof is exactly the same than those of theorem 5.6 of \cite{scholze-LT}
so that we are reduced to prove that $H^{d-1}(X_{U,\bar \eta},\overline \Zm_l)_{\mathfrak m}[1/l]$ is $\rho_{\mathfrak m}$-typic in the sense of definition 5.3 of \cite{scholze-LT}
which follows from \cite{h-t} or \cite{boyer-compositio}.
Note moreover that such result for $d=2$ goes back to Carayol in \cite{carayol-local}.

In particular we have the following property.

\begin{prop} \label{prop-tensoriel}
Let $\Pi^{\oo,U} \otimes L_g(\Pi_{v_1}^\vee)$ be a direct factor of  
$H^{d-1}(X_{U,\bar \eta_{v_1}},\overline \Qm_l)_{\mathfrak m}$, and consider its lattice 
given by the $\overline \Zm_l$-cohomology. Then this lattice is a tensorial product
$\Gamma_G \otimes \Gamma_W$ of a stable lattice $\Gamma_G$ (resp. $\Gamma_W$) of
$\Pi^{\oo,U}$ (resp. of $L_d(\Pi_{v_1}^\vee)$).
\end{prop}

\subsection{Proof of the main result}
\label{para-proof}

Let $\SC(\mathfrak m)$ be the supercuspidal support of the modulo $l$ reduction
of any $\Pi_{\widetilde m,w_0}$ in the near equivalence class associated to a minimal prime ideal
$\widetilde{\mathfrak m} \subset \mathfrak m$. Recall that $\SC(\mathfrak m)$ depends only on 
$\mathfrak m$ and by (H2) it's multiplicity free, we decompose it according to the inertial equivalence
classes $\varrho$ of irreducible $\overline \Fm_l$-representations of some $GL_{g(\varrho)}(F_{w_0})$
with $1 \leq g(\varrho) \leq d$:
$$\SC(\mathfrak m)=\coprod_{\varrho} \SC_{\varrho}(\mathfrak m).$$
For each such $\varrho$ we denote
$l_1(\varrho) \geq \cdots \geq l_{r(\varrho)}(\varrho)>0$ integers
so that $S_{\varrho}(\mathfrak m)$ can be written as a disjoint union of $r(\varrho)$ 
unlinked Zelevinsky segments
$$[\varrho\{ \delta_i \},\varrho \{ \delta_i+l_i(\varrho) \}]=\bigl \{ \varrho \{\delta_i \},
\varrho \{\delta_i+1 \},\cdots, \varrho \{\delta_i+l_i(\varrho) \} \bigr \}.$$
An irreducible $\overline \Fm_l$-representation $\tau_{w_0}$ of $GL_d(F_{w_0})$ whose supercuspidal
support is equal to $\SC(\mathfrak m)$, can be written as a full induced $\tau_{w_0} \simeq \bigtimes_{\varrho}
\tau_{\varrho}$ where each $\tau_{\varrho}$ is also a full induced representation
$$\tau_\varrho \simeq \bigtimes_{i=1}^{r(\varrho)} \tau_{\varrho,i}$$
with $\tau_{\varrho,i}$ of supercuspidal support equals to those of 
$[\varrho \{\delta_i \},\varrho \{ \delta_i+l_i(\varrho) \}]$. With the notations of \ref{nota-segment-modl},
each of these $\tau_{\varrho,i}$ can be written as
$$\tau_{\varrho,i} \simeq [\overleftarrow{a_1(\varrho)},\overrightarrow{a_2(\varrho)},\cdots,
\overrightarrow{a_{t_i(\varrho)}(\varrho)}]_\varrho.$$

\begin{defi} We say, cf. the remark following notation \ref{nota-segment-modl},
that $\tau_{w_0}$ is non degenerate if for all $\varrho$ and for all $1 \leq i \leq r(\varrho)$,
then $\tau_{\varrho,i} \simeq [\overleftarrow{a_i(\varrho)}]_\varrho$.
\end{defi}

\begin{prop} \label{prop-principale}
Let $\tau_{w_0}$ be an irreducible representation of $GL_d(F_{w_0})$ which is a subspace of
$$H^{d-1}(X_{U^{w_0}(\oo),\bar \eta_{w_0}},\overline \Fm_l)_{\mathfrak m}:=
\lim_{\atop{\rightarrow}{n}} H^{d-1}(X_{U^{w_0}(n),\bar \eta_{w_0}},\overline \Fm_l)_{\mathfrak m}.$$
Then $\pi_{w_0}$ is non degenerate.
\end{prop}

\begin{proof}
Note first that the supercuspidal support of $\tau_{w_0}$ must be $\SC(\mathfrak m)$.
The exhaustive filtration of $\Psi_{\varrho_0}$, cf. \S \ref{para-entier}, whose graduates are
Harris-Taylor perverse sheaves, gives a filtration of 
$H^{d-1}(X_{U^{w_0}(\oo),\bar \eta_{w_0}},\overline \Fm_l)_{\mathfrak m}$,
whose graduate are, thanks to theorem \ref{theo-torsion}, the
$$H^{0}(X_{U^{w_0}(\oo),\bar \eta_{w_0}},\PC(\pi_{w_0},t)(\frac{1-t+2k}{2}))_{\mathfrak m} 
\otimes_{\overline \Zm_l} \overline \Fm_l$$ 
for $\pi_{w_0} \in \scusp_{-1}(\varrho)$ with $\varrho$ such that $S_\varrho(\mathfrak m)$ is
non empty. Then $\tau_{w_0}$ must be a subspace of one of these graduates.
We argue by absurdity using the following lemma.

\begin{lemm} \label{lem-ss}
If $\rho$ be a subspace of $[\overleftarrow{t-1}]_{\varrho \{\frac{-\delta}{2} \} }\times \rho'$ then
with the notation \ref{nota-segment-modl}, 
\begin{itemize}
\item if $\delta=s-t$ then $\rho=
[\overleftarrow{t-1},\overrightarrow{1},\overbrace{\cdots}^{s-t-1}]_{\varrho}$;

\item if $\delta=t-s$ then $\rho=[\overbrace{\cdots}^{s-t-1},\overleftarrow{1},\overleftarrow{t-1}]_{\varrho}$;

\item otherwise, i.e. $t-s< \delta<s-t$, then 
$$\rho=[\overbrace{\cdots}^{s-t-\delta-1},\overleftarrow{1},\overleftarrow{t-1},\overrightarrow{1},
\overbrace{\cdots}^{s-t+\delta-1}]_\varrho.$$
\end{itemize}
\end{lemm}

\begin{proof}
The result is well known over $\overline \Qm_l$ and we can easily argue in the same way using
\begin{itemize}
\item the fact that all the $\varrho\{ \frac{1-s}{2}+k \}$ for $0 \leq k \leq s-1$ are pairwise distinct

\item and the property of commutation between the modulo $l$ reduction and the Jacquet functors.
\end{itemize}
Consider for example the case $t-s< \delta<s-t$.
By Frobenius reciprocity we see that the subspace we look for, is some
undetermined irreducible subspace of the modulo $l$ reduction of 
$[\overbrace{\cdots}^{s-t-\delta-1},\overleftarrow{1},\overleftarrow{t-1},\overrightarrow{1},
\overbrace{\cdots}^{s-t+\delta-1}]_\pi$. By convention, cf. notation
\ref{nota-segment-modl}, we denote such a subquotient 
$[\overbrace{\cdots}^{s-t-\delta-1},\overleftarrow{1},\overleftarrow{t-1},\overrightarrow{1},
\overbrace{\cdots}^{s-t+\delta-1}]_\varrho$.
\end{proof}

Suppose now, by absurdity, it exists an irreducible supercuspidal $\overline \Fm_l$-representation
$\varrho_0$ such that $\tau_{\varrho_0}$ is degenerate and take $i$ with
$$\tau_{\varrho_0,i} \simeq [\cdots,\overrightarrow{a},\cdots]_{\varrho_0},$$
and let $\beta \in \frac{1}{2} \Zm$ such that $\varrho_0 \{ \beta \}$ is the supercuspidal 
corresponding to the end of the arrow $\overrightarrow{a}$ in the previous notation.
From proposition \ref{prop-tensoriel}, we see that 
$\tau_{w_0} \otimes \overline \rho_{\mathfrak m}$ is a 
$\overline \Fm_l[GL_d(F_{w_0}) \times \gal(\overline F/F)]$-submodule of
$H^{d-1}(X_{U^{w_0}(\oo),\bar \eta_{w_0}},\overline \Fm_l)_{\mathfrak m}$.
After restricting the Galois action to the Weil group at $w_0$, we see that
$\tau_{w_0} \otimes \Lm(\varrho_0 \{ \beta \} )$ has to be a  of 
$\overline \Fm_l[GL_d(F_{w_0}) \times W_{w_0}]$-submodule of
$H^{d-1}(X_{U^{w_0}(\oo),\bar \eta_{w_0}},\overline \Fm_l)_{\mathfrak m}$
and, as before, on one of the
$$H^{0}(X_{U^{w_0}(\oo),\bar \eta_{w_0}},\PC(\pi_{w_0},t)(\frac{1-t+2k}{2}))_{\mathfrak m} 
\otimes_{\overline \Zm_l} \overline \Fm_l$$ 
for $\pi_{w_0} \in \scusp_{-1}(\varrho_0)$. 
Recall that this last cohomology group is parabolically induced from
$$H^{0}(X^{\geq tg}_{U^{w_0}(\oo),\bar \eta_{w_0},\overline{1_{tg}}},
\PC_1(\pi_{w_0},t)(\frac{1-t+2k}{2}))_{\mathfrak m} 
\otimes_{\overline \Zm_l} \overline \Fm_l$$ 
where by lemma \ref{lem-coho-flou}, every irreducible
$\overline \Fm_l[P_{tg,d}(F_{w_0}) \times W_{w_0}]$-subquotient 
of it can be written as $[\overleftarrow{t-1}]_{\varrho_0 \{-\frac{\delta}{2}\} } \otimes \tau \otimes
\Lm(\varrho_0 \{ \alpha \})$ where $\tau$ is any irreducible representation of $GL_{d-tg}(F_{w_0})$
and $\alpha \in \frac{1}{2} \Zm$ is such that $\varrho_0 \{\alpha\}$ belongs to the supercuspidal
support of $[\overleftarrow{t-1}]_{\varrho_0 \{ -\frac{\delta}{2} \} }$.

The contradiction then follows from the previous lemma.
\end{proof}

Finally our restricted version of Ihara's lemma given in the introduction, follows 
from propositions \ref{prop-coho} and \ref{prop-principale}.

\rem Note that in the previous proof we used the second part of (H2) to say that the modulo $l$ reduction of 
$[\overleftarrow{s-1}]_\pi$ is irreducible and so any of its subspace is non degenerate, cf.
the remark just before theorem \ref{theo-torsion}. Using the main
result of \cite{boyer-local-ihara}, we have this last property without any hypothesis so, as this is the only place
where we use the second part of (H2), we can remove it.

\subsection{Level rising}

Before dealing with the general case, consider first the case $d=2$ and take $l \geq 3$ such that the order of 
$q_{w_0}$ modulo $l$ is $2$. Suppose then, by absurdity, there exists a maximal 
ideal $\mathfrak m$ such that 
\begin{itemize}
\item[(a)] for every prime ideal $\widetilde{\mathfrak m} \subset \mathfrak m$, the local component at
$w_0$ of $\Pi_{\widetilde{\mathfrak m}}$ is unramified;

\item[(b)] for such prime ideal, write $\Pi_{\widetilde{\mathfrak m},w_0} \simeq 
\chi_{w_0,1} \times \chi_{w_0,2}$, and suppose $\chi_{w_0,1} \chi_{w_0,2}^{-1} \equiv \nu \mod l$.
\end{itemize}
Using (a) and the spectral sequence of vanishing cycles at $w_0$, we obtain
$$H^1(X_U,\overline \Fm_l)_{\mathfrak m} \simeq
H^1(X_{U,\bar s_{w_0}}^{=1},\Psi(\overline \Fm_l))_{\mathfrak m}$$
where $X_{U,\bar s_{w_0}}^{=1}$ is the ordinary locus of the geometric special fiber of $X_U$ at $w_0$.
It's well known that this cohomology group is parabolic induced. Moreover the only non degenerate
irreducible representation of $GL_d(F_{w_0})$ which is a subquotient of the modulo $l$ reduction of 
$\chi_{w_0,1} \times \chi_{w_0,1} \nu$ is cuspidal, because of the fact that $q_{w_0}$ is of order $2$
modulo $l$, this non degenerate representation can't be a subspace of the induced representation
$H^1(X_U,\overline \Fm_l)_{\mathfrak m}$. The contradiction then given by Ihara's lemma.

In higher dimension, recall first the notations of the beginning of the previous section.
For a minimal prime ideal $\widetilde{\mathfrak m} \subset \mathfrak m$ and an automorphic
representation $\Pi \in \Pi_{\widetilde{\mathfrak m}}$ in the near equivalence class associated to
$\widetilde{\mathfrak m}$, we write its local component at $w_0$
$$\Pi_{w_0} \simeq \bigtimes_{\varrho} \Pi_{w_0}(\varrho)$$
and $\Pi_{w_0}(\varrho) \simeq \bigtimes_{i=1}^{r(\varrho)} \Pi_{w_0}(\varrho,i)$
where for each $1 \leq i \leq r(\varrho)$, the modulo $l$ reduction of the supercuspidal support
of $\Pi_{w_0}(\varrho,i)$ is, with the notations of the previous section, those of the Zelevinsky segment 
$[\varrho \{\delta_i \},\varrho \{\delta_i+l_i(\varrho) \} ]$.

\begin{prop} \label{prop-diminution}
Take a maximal ideal $\mathfrak m$ verifying the hypothesis (H1) and (H2).
Let $\varrho_0$ such that $\SC_{\varrho_0}(\mathfrak m)$ is non empty and consider 
$1 \leq i \leq r(\varrho_0)$. Then there exists a minimal prime ideal 
$\widetilde{\mathfrak m} \subset \mathfrak m$ and an automorphic
representation $\Pi \in \Pi_{\widetilde{\mathfrak m}}$ such that with the previous notation
$\Pi_{w_0}(\varrho_0,i)$ is non degenerate, i.e. isomorphic to $\st_{l_i(\varrho)}(\pi_{w_0})$
for some irreducible cuspidal $\overline \Qm_l$-representation $\pi_{w_0}$.
\end{prop}

\rem In particular if $\SC(\mathfrak m)=\SC_{\varrho_0}(\mathfrak m)$ and $r(\varrho_0)=1$, i.e.
the supercuspidal support of the modulo $l$ reduction of the local component at $w_0$ of any
$\Pi \in \Pi_{\widetilde{\mathfrak m}}$ for any $\widetilde{\mathfrak m} \subset \mathfrak m$,
is a Zelevinsky segment, then $\Pi_{w_0}$ is non degenerate. This is the case considered
in section 4.5 of \cite{CHT}. In an incoming work, we intend to explain how to rise the level
 simultaneously for all $1 \leq i \leq r(\varrho_0)$ and all $\varrho_0$ together.

\begin{proof}
For a minimal prime ideal $\widetilde{\mathfrak m} \subset \mathfrak m$
and $\Pi \in \Pi_{\widetilde{\mathfrak m}}$, we write
$$\Pi_{w_0}(\varrho_0,i) \simeq \st_{s_1}(\pi_{w_0,1}) \times \cdots \st_{s_a}(\pi_{w_0,a})$$
where $s_1 \geq s_2 \geq \cdots \geq s_a \geq 1$ and $\pi_{w_0,1},\cdots,\pi_{w_0,a}$
irreducible cuspidal $\overline \Qm_l$-representations of type $\varrho_0$ of $GL_{g_i}(F_{w_0})$.
We then argue by absurdity, i.e. we suppose $a \geq 2$ for all $\widetilde{\mathfrak m} \subset \mathfrak m$
and we choose such $\widetilde{\mathfrak m}$ so that $s_1$ is maximal.
The strategy is then, using lemma \ref{lem-ss}, to construct a degenerate 
$\overline \Fm_l[GL_d(F_{w_0})]$-subspace of 
$H^{d-1}(X_{U^{w_0}(\oo),\bar \eta_{w_0}},\overline \Fm_l)_{\mathfrak m}$ which contradicts 
the genericness property of irreducible sub-modules of this cohomology group as proved before.
In \cite{boyer-compositio} \S 3.6, we prove that for all minimal prime 
$\widetilde{\mathfrak m}' \subset \mathfrak m$ 
$$H^i(X_{U,\bar s_{w_0}},HT_{\overline \Qm_l}(\pi_{w_0,1},t))_{\widetilde{\mathfrak m}'}=(0)$$
 either if $t>s_1$ or for $t=s_1$, if $i \neq 0$.
Consider now the filtration 
$$\Fil^{- s_1g_1}_*(\Psi_{\varrho_0}) \harrow \Fil^{1- s_1g_1}_*(\Psi_{\varrho_0}) 
\harrow \Psi_{\varrho_0}$$
and recall that, by construction, 
\begin{itemize}
\item $\Fil^{- s_1g_1}_*(\Psi_{\varrho_0})$ is supported in
$X^{> s_1g_1}_{\IC,\bar s_{w_0}}$ 

\item $\gr^{1- s_1g_1}_*(\Psi_{\varrho_0}) \simeq 
\bigoplus_{\pi_{w_0} \in \scusp_{-1}(\varrho_0)} \PC(\pi_{w_0},s_1)(\frac{s_1-1}{2})$.
\end{itemize}
By the theorem \ref{theo-torsion} we know the cohomology groups of Harris-Taylor perverse
sheaves to be free, so
\begin{itemize}
\item $H^i(X_{U,\bar s_{w_0}},\Fil^{- s_1g_1}_*(\Psi_{\varrho_0}))_{\mathfrak m}=(0)$; 

\item $H^i(X_{U,\bar s_{w_0}},\Psi_{\varrho_0}/\Fil^{- s_1g_1}_*(\Psi_{\varrho_0}))_{\mathfrak m}$
is free.
\end{itemize}
Recall moreover, cf. \cite{boyer-compositio} \S 3.6, that $\Pi_{w_0} \otimes \Lm(\pi_{w_0,1})(\frac{s_1-1}{2})$
is a direct factor of 
$$H^i(X_{U^{w_0}(\oo),\bar s_{w_0}},
HT_{\overline \Qm_l}(\pi_{w_0,1},s_1)(\frac{s_1-1}{2}))_{\widetilde{\mathfrak m}},.$$
Moreover the stable lattice given by the $\overline \Zm_l$-cohomology looks like
$\bigl ( \Gamma(\varrho_0,1) \times \Gamma^{\varrho_0,1} \bigr ) \times \Gamma_W$
where 
\begin{itemize}
\item $\Gamma(\varrho_0,1)$ is a stable lattice of $\st_{s_1}(\pi_{w_0,1})$,

\item $\Gamma^{\varrho_0,1}$ is a stable lattice of $\bigl (\bigtimes_{\varrho \neq \varrho_0} 
\Pi_{w_0}(\varrho) \bigr ) \times \bigl ( \bigtimes_{i=2}^{r(\varrho_0)} \Pi_{w_0}(\varrho_0,i) \bigr )$,

\item and $\Gamma_W$ is a stable lattice of $\Lm(\pi_{w_0,1})(\frac{s_1-1}{2})$.
\end{itemize}
The result then follows from lemma \ref{lem-ss}.

\end{proof}

\bibliographystyle{plain}
\bibliography{bib-ok}

\end{document}